\theoremstyle{plain}
\newtheorem{theorem}{Theorem}[section]
\newtheorem{maintheorem}{Theorem}
\newtheorem{lemma}[theorem]{Lemma}
\theoremstyle{remark}
\newtheorem{remark}[theorem]{Remark}
\def\R{\ensuremath{\mathbb R}}
\def\N{\ensuremath{\mathbb N}}
\def\dist{\ensuremath{d}}
\def\lip{\operatorname{Lip}}
\newcommand{\qand}{\quad\text{and}\quad}
\numberwithin{equation}{section}
\begin{document}
\title[Equilibrium states for impulsive semiflows]{Equilibrium states for impulsive semiflows}

\author[J. F. Alves]{Jos\'{e} F. Alves}
\address{Jos\'{e} F. Alves\\ Centro de Matem\'{a}tica da Universidade do Porto\\ Rua do Campo Alegre 687\\ 4169-007 Porto\\ Portugal}
\email{jfalves@fc.up.pt} \urladdr{http://www.fc.up.pt/cmup/jfalves}

\author[M. Carvalho]{Maria Carvalho}
\address{Maria Carvalho\\ Centro de Matem\'{a}tica da Universidade do Porto\\ Rua do
Campo Alegre 687\\ 4169-007 Porto\\ Portugal}
\email{mpcarval@fc.up.pt}

\author[Jaqueline Siqueira]{Jaqueline Siqueira}
\address{Jaqueline Siqueira\\ Centro de Matem\'{a}tica da Universidade do Porto\\ Rua do
Campo Alegre 687\\ 4169-007 Porto\\ Portugal}
\email{jaqueline.rocha@fc.up.pt}

\date{\today}
\thanks{JFA and MC were partially supported by CMUP (UID/MAT/00144/2013), which is funded by FCT (Portugal) with national (MEC) and European structural funds through the programs FEDER, under the partnership agreement PT2020.
JFA was also partially supported by Funda\c c\~ao Calouste Gulbenkian.  JS was supported by CNPq-Brazil.}
\keywords{Impulsive Dynamical System; Equilibrium State; Variational Principle.}
\subjclass[2010]{37A05, 37A35}

\begin{abstract}
We consider impulsive semiflows defined on compact metric spaces and give sufficient conditions, both on the semiflows and the potentials, for the existence and uniqueness of equilibrium states. We also generalize the classical notion of topological pressure to our setting of discontinuous semiflows and prove a variational principle.
\end{abstract}

\maketitle

\setcounter{tocdepth}{2}

\tableofcontents 

\section{Introduction}

Impulsive dynamical systems may be interpreted as  suitable mathematical models of real world phenomena that display abrupt changes in their behavior, and are  described by three  objects: a continuous semiflow on a metric space $X$; a set $D \subset X$ where the flow experiments  sudden perturbations; and an impulsive function $I:D\to X$ which determines the change on a trajectory each time it collides with the impulsive set $D$.
See for instance reference~\cite{LBS89}, where one may find several examples of evolutive processes which are analyzed through differential equations with impulses.

For many years the achievements on the theory of impulsive dynamical systems concerned the behavior of trajectories, their limit sets and their stability; see e.g. \cite{K94a} and references therein. The first results on the ergodic theory of impulsive dynamical systems were established in \cite{AC14}, where
sufficient conditions for the existence of invariant  probability measures on the Borel sets were given.
Afterwards, it was natural to look  for some special classes of  invariant measures. So far, a useful approach has been to use potentials and finding equilibrium states. However, as the classical notion of topological entropy  requires continuity and impulsive semiflows exhibit discontinuities, it became necessary to introduce  a generalized concept of topological entropy, and this has been done in  \cite{ACV15}.  Moreover, it was proved that the new notion coincides with the classical one for continuous semiflows, and also a partial variational principle for impulsive semiflows: the topological entropy coincides with the supremum of the metric entropies of time-one maps.

Our aim in this paper was to extend the results of \cite{ACV15} in two directions. Firstly we establish a variational principle for a wide class of potentials; then we present sufficient conditions for the existence and uniqueness of equilibrium states for those potentials. Once more, due to the discontinuities of the impulsive semiflows, we needed to define a genera\-lized concept of topological pressure; and again we show that this new definition coincides with the classical one for continuous semiflows.

\subsection{Impulsive semiflows}\label{se.impulsive-semiflow}

Consider a compact metric space $(X,d)$, a continuous semiflow $\varphi:\R^+_0 \times X\to X$, a nonempty compact  set $D\subset X$ and a continuous map $I:D \to X$ such that $I(D)\cap D=\emptyset$. Under these conditions we say that $(X,\varphi,D,I)$ is an \emph{impulsive dynamical system}.
The first visit of each $\varphi$-trajectory to $D$ will be registered by the function $\tau_1:X\to~[0,+\infty]$, defined as
$$
\tau_1(x)=
\begin{dcases}
\inf\left\{t> 0 \colon \varphi_t(x)\in D\right\} ,& \text{if } \varphi_t(x)\in D\text{ for some }t>0;\\
+\infty, & \text{otherwise.}
\end{dcases}
$$
The \emph{impulsive trajectory} $\gamma_x$  and the subsequent \emph{impulsive times} $\tau_2(x),\tau_3(x),\dots$ (possibly finitely many) of a given point $x\in X$ are defined according to the following rules:
for $0\le t<\tau_{1}(x)$ we set  $\gamma_x(t)=\varphi_t(x).$
Assuming that $\gamma_x(t)$ is defined for $t<\tau_{n}(x)$ for some $n\ge 1$, we set
  $$\gamma_x(\tau_{n}(x))=I(\varphi_{\tau_n(x)-\tau_{n-1}(x)}(\gamma_x({\tau_{n-1}(x)}))).$$
  Defining the $(n+1)^{\text{th}}$ impulsive time of $x$ as
  $$\tau_{n+1}(x)=\tau_n(x)+\tau_1(\gamma_x(\tau_n(x))),$$
 for $\tau_n(x)<t<\tau_{n+1}(x)$, we set
         $$\gamma_x(t)=\varphi_{t-\tau_n(x)}(\gamma_x(\tau_n(x))).$$
We define the \emph{time duration} of the trajectory of $x$ as
 $\Upsilon(x)=\sup_{n\ge 1}\,\{\tau_n(x)\}.
 $
Since we are assuming  $I(D)\cap (D)=\emptyset$, it follows from \cite[Remark 1.1]{AC14} that we have  $\Upsilon(x)=\infty$ for all $x\in X$. Thus we have the impulsive trajectories defined for all positive times.
 This allows us to introduce the  \emph{impulsive semiflow} $\psi$ of an impulsive dynamical system $(X,\varphi, D, I)$ as
$$
\begin{array}{cccc}
        \psi:  &  \mathbb{R}^+_0 \times X & \longrightarrow &X \\
        & (t,x) & \longmapsto & \gamma_x(t), \\
        \end{array}$$
where $\gamma_x$ stands for the impulsive trajectory of $x$ determined by $(X,\varphi,D, I)$.
It was  proved in  \cite[Proposition 2.1]{B07} that $\psi$ is indeed a semiflow, though not necessarily continuous.

\begin{remark}\label{taueta}
It is known that the function $\tau_1$ is  lower semicontinuous on the set $X\setminus D$; see \cite[Theorem~2.7]{C04a}. Since we are assuming  that $I(D)\cap (D)=\emptyset$ and $I(D)$ is  compact, then there exists some $\eta>0$ such that
for all $x\in X$  and all $n\in \N$  we have
  $\tau_{n+1}(x)-\tau_n(x)\geq\eta$.
  \end{remark}

Now we state some  conditions about the continuous semiflow $\varphi$ on the sets $D$ and $I(D)$ which will be useful for the statements of our main results.
We define for $t>0$
\begin{equation}\label{eq.dxi}
D_t=\bigcup_{x\in D }\{\varphi_t(x) \colon  0<s<t\}.
\end{equation}
Given $\xi>0$, we say that   $\varphi$ is \emph{$\xi$-regular on  $D$} if
\begin{enumerate}
\item $D_t$ is an open set  for all $0<t\le \xi$;
\item if $\varphi_t(x)\in D_{\xi}$ for some  $x\in X$ and $t>0$, then $ \varphi_{s}(x)\in D$  for some $0\le s<t$.
\end{enumerate}
We say that $\varphi$ satisfies a \emph{$\xi$-half-tube condition} on a compact set $A\subset X$ if
\begin{enumerate}
\item $ \varphi_t(x) \in A \,\Rightarrow \,\varphi_{t+s}(x) \notin A$ for all $0 < s < \xi$;
\item $\{\varphi_t(x_1) \colon  0<t\le \xi\}\cap \{\varphi_t(x_2) \colon  0<t\le \xi\}=\emptyset$ for all $x_1,x_2\in A$ with $x_1\neq x_2$;
\item there exists $C>0$ such that, for all $x_1,x_2\in A$ with $x_1\neq x_2$, we have
$$0\leq t<s\leq \xi \quad \Rightarrow \quad d\,(\varphi_t(x_1), \varphi_t(x_2))\leq C\,d\,(\varphi_s(x_1), \varphi_s(x_2)).$$
\end{enumerate}
In our main results  we will assume that  $\varphi$ satisfies a $\xi$-half-tube condition on the compact sets $D$ and $I(D)$. In particular, the first condition in the definition of $\xi$-half-tube   for $A=D$ implies  that
$\tau_1(x)\ge \xi>0$ for all $ x\in D$.
Given  $\xi>0$  we define
$$X_\xi=X\setminus (D_\xi\cup D).$$
Since $D$ is compact, $I$ is continuous and $I(D)\cap D=\emptyset$, we may choose $\xi$ small enough so that $I(D)\cap D_\xi=\emptyset$. Therefore,   the set  $X_\xi$ is forward invariant under $\psi$, that is
\begin{equation}\label{eq.forinva}
\psi_t(X_\xi) \subseteq X_\xi,\quad \forall t \geq 0.
\end{equation}
For future use, we introduce the function
$$\tau^*_\xi:X_\xi \cup D \to [0,+\infty]$$
defined as
$$
\tau^*_\xi(x)=
\begin{cases}
\tau_1(x), &\text{if $x\in X_\xi$};\\
0, &\text{if $x\in D$}.
\end{cases}
$$
For latter reference we gather as (C1)-(C5) all the  properties that we need about impulsive dynamical systems in the list below. We assume that there exists $\xi_0>0$  such that for all $0<\xi<\xi_0$ we  have:
\begin{enumerate}
\item[(C1)]  $I:D \to X$ is Lipschitz  with $\lip(I)\le 1$ and $I(D)\cap D=\emptyset$;
\item[(C2)] $I(\Omega_\psi \cap D) \subset \Omega_\psi \setminus D$, where $\Omega_\psi$ denotes the set of non-wandering points of $\psi$;
\item[(C3)]  $\varphi$ is $\xi$-regular on   $D$;
\item[(C4)] $\varphi$ satisfies a $\xi$-half-tube condition on both $D$ and $I(D)$;
\item[(C5)] $\tau^*_\xi$ is continuous.

\end{enumerate}
Notice that conditions  (C3)-(C4) hold, for instance, when $\varphi$ is a $C^1$ semiflow on a manifold for which $D$ and $I(D)$  are submanifolds transversal to the flow direction.  Moreover, condition (C2) ensures that $\Omega_\psi \setminus D$ is invariant by $\psi$ (cf. \cite[Theorem B]{AC14}) and conditions (C2) and (C5) are essential to guarantee that $\mathcal{M}_\psi(X) \neq \emptyset$ (cf. \cite[Theorem A]{AC14}).

\subsection{Expansiveness}\label{se.expansive-specification}
Here we recall the classical definition of expansiveness for a continuous semiflow and introduce an adapted version for an impulsive semiflow.

\subsubsection*{Continuous semiflow}
Let $\varphi$ be a continuous semiflow on a   metric space  $(X,d)$.
We say that $\varphi$ is  \emph{expansive} on $X$ if for every $\delta>0$ there exists $\varepsilon>0$ such that if  $x, y \in X$ and a continuous map $s: \mathbb{R}_0^+ \rightarrow \mathbb{R}_0^+$ with $s(0)=0$ satisfy
$d\,(\varphi_t(x),\varphi_{s(t)}(y)) < \varepsilon $  
for all $t\ge 0$, then
$y = \varphi_t(x)$
for some $0<t < \delta$.

\subsubsection*{Impulsive semiflow} Let  $\psi$ be  the semiflow  of an impulsive dynamical system $(X,\varphi,D,I)$.
Given $\varepsilon>0$, consider $B_\varepsilon(D)$ the $\varepsilon$-neighborhood of $D$ in $X$.
We say that $\psi$ is  \emph{expansive} on $X$ if for every $\delta>0$ there exists $\varepsilon>0$ such that if  $x, y \in X$ and a continuous map $s: \mathbb{R}_0^+ \rightarrow \mathbb{R}_0^+$ with $s(0)=0$ satisfy
$d\,(\psi_t(x),\psi_{s(t)}(y)) < \varepsilon$  
for all $t\ge 0$
such that $\psi_t(x),\psi_{s(t)}(y)\notin B_\varepsilon(D)$, then
$y = \psi_t(x)$
for some $0<t < \delta$.

\subsection{Specification} Let $\psi$ be a semiflow  on a   metric space $(X,d)$. We say that  $\psi$ has the \emph{specification} property on $X$ if for all $\varepsilon>0$ there exist $L>0$ such that,
for any sequence $x_0,\dots, x_n$ of points in $X$ and any sequence $0\le t_0<\cdots< t_{n+1}$ such that
$t_{i+1}-t_i \geq L$ for all $0\le i\le n$, there are  $y \in X$ and    $r:\mathbb{R}^+_0 \rightarrow \mathbb{R}^+_0$  constant on each interval $[t_i, t_{i+1}[$ and satisfying
$$ r([t_0, t_1[)=0\quad \text{and}\quad
 |r([t_{i+1}, t_{i+2}[) - r([t_i, t_{i+1}[)| < \varepsilon,
$$
for which
$$d\,(\psi_{t+r(t)}(y),\psi_t(x_i)) < \varepsilon, \quad  \forall\, t \in [t_i, t_{i+1}[ \quad\forall \,\,0\le i \le n.$$
The specification is said to be \emph{periodic} if we can always choose $y$ periodic.

\subsection{Equilibrium states}\label{se.equ-state}

Let $\psi$ be a semiflow on a compact metric space $(X, d)$. In what follows we will denote by $\mathcal{M}_{\psi}^t(X)$ the set  probability measures defined on the $\sigma$-algebra of the Borel subsets of $X$ which are invariant under $\psi_t$, and  set $$\mathcal{M}_{\psi}(X) = \bigcap_{t\geq0}\,\mathcal{M}_{\psi}^t(X).$$
An \emph{equilibrium state} for a continuous potential $f :X\to \mathbb R$ is a probability measure $\mu_f \in \mathcal{M}_{\psi}(X)$ which maximizes the map
$$ \mathcal{M}_{\psi}(X)\ni\mu \quad  \longmapsto \quad h_\mu(\psi_1) + \int f\,d\mu,$$
where $h_\mu(\psi_1) $ stands for the \emph{metric entropy} of the time-one map of the semiflow $\psi$ with respect to the measure $\mu$.
One of the main issues concerning equilibrium states is to determine an appropriate space of potentials. We consider again  the   cases of continuous and  impulsive semiflows separately.

\subsubsection*{Continuous semiflow} Given a continuous semiflow $\varphi$ on $X$, denote by ${V}(\varphi)$ the space of continuous maps $f:X \rightarrow \mathbb{R}$ for which there are $K>0$ and $\varepsilon > 0$ such that for every $t>0$ we have
\begin{equation}\label{temK}
\left|\int_0^t f(\varphi_s(x))\,ds - \int_0^t f(\varphi_s(y))\,ds\right|< K,
\end{equation}
whenever
$$d\,(\varphi_s(x), \varphi_s(y)) < \varepsilon ,\quad   \forall  s  \in [0,t].$$
It was proved in \cite{F77} that each $f \in V(\varphi)$ has a unique equilibrium state if $\varphi$ is continuous and satisfies expansiveness and periodic specification. The same conclusion was obtained in~\cite{CT15} without the assumption of periodicity in the specification.

\subsubsection*{Impulsive semiflow} Consider  now $\psi$ as  the semiflow of an impulsive dynamical system $(X,\varphi,D,I)$. In this context we need to restrict the set of potentials for which we are going to find  an equilibrium state, introducing a slightly more demanding version of the space  of potentials.
We define $V^*(\psi)$ as the set of continuous maps $f:X \rightarrow \mathbb{R}$ for which
\begin{enumerate}
\item $f(x)=f(I(x))$ for all $x\in D$;
\item there are $K>0$ and $\varepsilon > 0$ such that for every $t>0$  we have
\begin{equation}\label{temK*}
\left|\int_0^t f(\psi_s(x))\,ds - \int_0^t f(\psi_s(y))\,ds\right|< K,
\end{equation}
whenever
$d\,(\psi_{s}(x), \psi_s(y)) < \varepsilon$ for all $ s \in [0,t]$ such that $\psi_s(x),\psi_s(y)\notin B_\varepsilon(D)$.
\end{enumerate}

For instance, constant potentials   belong to~$V^*(\psi)$.
The aim of our first result is to extend Franco's Theorem \cite{F77} on the existence and uniqueness of equilibrium states for   potentials in $V^*(\psi)$. As in \cite{F77}, to ensure uniqueness we need to assume some finite dimensionality condition on the metric space~$X$; see e.g.  \cite[Chapter~3]{E95}. Here we also need a uniform  control on the number of preimages under the impulsive function~$I$.

\begin{maintheorem}\label{te.existence_uniqueness}
Let $X$ be a compact 
metric space and $\psi$ the semiflow of    an impulsive dynamical system $(X,\varphi,D,I) $ for which  (C1)-(C5) hold. If  $\psi$   is expansive and has the periodic specification property in  $\Omega_\psi \setminus D$, then any potential $f \in V^*(\psi)$ has an 
equilibrium state. Moreover, if $\dim(X)<\infty$  and there is $k>0$ such that $\#I^{-1}(\{y\})\le k$ for every $y \in I(D)$, then the equilibrium state is unique.
 \end{maintheorem}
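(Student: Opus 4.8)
\emph{Strategy.} The natural plan is to collapse the jumps of the impulsive semiflow, turning $\psi$ into a genuine \emph{continuous} semiflow on a quotient space, and then invoke Franco's theorem \cite{F77}. Let $\sim$ be the (closed) equivalence relation on $X$ generated by $x\sim I(x)$ for $x\in D$, let $Y=X/{\sim}$ with quotient projection $\pi\colon X\to Y$, and fix a metric $d_Y$ compatible with the (compact, metrizable) quotient topology. Because $I(D)\cap D=\emptyset$, the equivalence classes are exactly the sets $\{y\}\cup I^{-1}(\{y\})$ for $y\in I(D)$ together with the singletons $\{y\}$ for $y\notin D\cup I(D)$; in particular $\pi$ restricts to a \emph{bijection} $X\setminus D\to Y$ and $\#\pi^{-1}(\{y\})\le 1+\#I^{-1}(\{y\})$ for all $y$. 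The first task, which parallels the entropy construction in \cite{ACV15} and is where (C3)--(C5) enter, is to prove that $\psi$ descends through $\pi$ to a continuous semiflow $\widetilde\psi$ on $Y$, i.e. $\pi\circ\psi_t=\widetilde\psi_t\circ\pi$ for all $t\ge0$: the $\xi$-regularity and the $\xi$-half-tube condition make the glued trajectories fit together and keep the quotient Hausdorff, while the continuity of $\tau^*_\xi$ upgrades this to joint continuity in $(t,x)$.

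\emph{Passing measures and potentials through $\pi$.} Since $\psi_t(x)\notin D$ for every $t>0$ and every $x\in X$ (immediate from the definition of $\gamma_x$ and $\tau_1$), every $\mu\in\mathcal M_\psi(X)$ has $\mu(D)=0$; and as $X\setminus D$ is $\psi$-forward invariant and $\pi$ restricts there to a Borel isomorphism onto $Y$ conjugating $\psi_t$ to $\widetilde\psi_t$, the push-forward $\pi_*$ is a bijection $\mathcal M_\psi(X)\to\mathcal M_{\widetilde\psi}(Y)$ with $(X,\psi_1,\mu)$ measure-theoretically isomorphic to $(Y,\widetilde\psi_1,\pi_*\mu)$, whence $h_\mu(\psi_1)=h_{\pi_*\mu}(\widetilde\psi_1)$. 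A potential $f\in V^*(\psi)$ is constant on $\sim$-classes by property~(1), so $f=\widetilde f\circ\pi$ for a continuous $\widetilde f\colon Y\to\R$; since $\widetilde\psi_s(\pi x)=\pi(\psi_s x)$ one gets the pointwise identity $\int_0^t\widetilde f(\widetilde\psi_s(\pi x))\,ds=\int_0^t f(\psi_s(x))\,ds$, hence $\int\widetilde f\,d(\pi_*\mu)=\int f\,d\mu$ for all $\mu$, and property~(2) of $V^*(\psi)$ — using that, by Remark~\ref{taueta}, the excursions of a $\psi$-orbit into $B_\varepsilon(D)$ last less than a fixed time and $f$ is bounded — transfers to show $\widetilde f\in V(\widetilde\psi)$. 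Consequently $\pi_*$ carries the functional $\mu\mapsto h_\mu(\psi_1)+\int f\,d\mu$ bijectively onto $\nu\mapsto h_\nu(\widetilde\psi_1)+\int\widetilde f\,d\nu$, so $f$ has an equilibrium state for $\psi$ iff $\widetilde f$ has one for $\widetilde\psi$, and the equilibrium states correspond one-to-one.

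\emph{Transferring the hypotheses and concluding.} By (C2) we have $I(\Omega_\psi\cap D)\subset\Omega_\psi\setminus D$, so $\Lambda:=\pi(\Omega_\psi)=\pi(\Omega_\psi\setminus D)$ is a compact $\widetilde\psi$-invariant subset of $Y$, and since invariant measures are carried by non-wandering sets, $\mathcal M_{\widetilde\psi}(Y)=\mathcal M_{\widetilde\psi}(\Lambda)$. An $\varepsilon$--$\delta$ argument then converts ``$\psi$ is expansive'' and ``$\psi$ has periodic specification in $\Omega_\psi\setminus D$'' into the classical expansiveness and periodic specification of $\widetilde\psi$ on $\Lambda$: $\widetilde\psi$-orbits that stay $d_Y$-close lift to $\psi$-orbits that are $d$-close away from a neighbourhood of $D$ — exactly the situation that the impulsive definitions are designed to tolerate — and the piecewise-constant reparametrizations appearing in the two versions of specification are matched directly. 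Franco's theorem \cite{F77}, applied to the continuous expansive semiflow $\widetilde\psi|_\Lambda$ with periodic specification and $\widetilde f\in V(\widetilde\psi)$, then produces an equilibrium state for $\widetilde f$, which pulls back through $\pi_*$ to one for $f$. For uniqueness, Franco's theorem additionally needs $\dim\Lambda<\infty$: since $\pi$ is a closed surjection with fibres of cardinality $\le 1+k$ (because $\#I^{-1}(\{y\})\le k$), the classical Hurewicz estimate for closed finite-to-one maps gives $\dim Y\le\dim X+k<\infty$, hence $\dim\Lambda<\infty$; Franco then yields a \emph{unique} equilibrium state for $\widetilde f$, and uniqueness for $f$ follows from the bijection $\pi_*$.

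\emph{Main obstacle.} The delicate points should be (a) establishing that the quotient semiflow $\widetilde\psi$ is continuous and that $Y$ is metrizable, which genuinely uses all of (C3)--(C5) and builds on \cite{ACV15}; and (b) the $\varepsilon$--$\delta$ bookkeeping of the previous paragraph, namely verifying that short excursions near $D$ (controlled by the uniform lower bound $\eta$ of Remark~\ref{taueta}) spoil neither the shadowing/expansiveness estimates nor the Bowen-type integral bounds, so that the $D$-avoiding notions for $\psi$ really do translate into the classical ones for $\widetilde\psi$ on $\Lambda$.
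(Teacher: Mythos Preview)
Your proposal is correct and follows essentially the same route as the paper: quotient by the relation generated by $x\sim I(x)$, show the induced semiflow $\widetilde\psi$ is continuous (via the machinery of \cite{ACV15}, which is where (C1), (C3)--(C5) are used), transfer expansiveness and periodic specification from $\psi|_{\Omega_\psi\setminus D}$ to $\widetilde\psi$ on $\pi(\Omega_\psi\setminus D)$, check that $f\in V^*(\psi)$ descends to $\widetilde f\in V(\widetilde\psi)$, apply Franco \cite{F77}, and pull the equilibrium state back through the measure bijection. The only cosmetic difference is that the paper does not define $\widetilde\psi$ on the whole quotient $Y$ but only on the forward-invariant compact set $\widetilde X_\xi=\pi(X\setminus(D\cup D_\xi))$, which sidesteps the continuity problem near the ``incoming'' side of $D$; since $\Omega_\psi\setminus D\subset X_\xi$ and all invariant measures live there, this amounts to the same thing as your restriction to $\Lambda$. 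Your dimension argument via the Hurewicz-type bound for closed finite-to-one maps is exactly what the paper does, citing \cite[Theorem~3.3.7]{E95}.
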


In particular, taking $f$  the null function,  we deduce that the impulsive semiflow $\psi$ has a  probability measure of maximum entropy, which in some cases is unique.

\subsection{Topological pressure}\label{se.top-pressure}
Here we briefly recall the classical definition of topological pressure for continuous semiflows (see  \cite{F77} for details) and   generalize this concept to impulsive semiflows.

\subsubsection*{Classical definition}\label{sse.topopological_pressure}
Let $(X,d)$ be a compact metric space and $\varphi:X\times \mathbb{R}_0^+ \rightarrow X$ be a continuous semiflow. Given $\varepsilon > 0$ and $t \in \mathbb{R}^+$, a subset $E$ of $X$ is said to be \emph{$(\varphi,\varepsilon,t)$-separated} if for any $x,y \in X$ with $x \neq y$ there is some $s \in \,[0,t]$ such that $d\,(\varphi_s(x), \varphi_s(y))>\varepsilon$.
Given $f:X \rightarrow \mathbb{R}$ a continuous potential, define
\begin{align*}
Z(\varphi,f,\varepsilon,t) &= \sup\,\,\left\{\sum_{x \in E}\, e^{\int_0^t f(\varphi_s(x))\,ds} \colon E\, \text{ is }\,(\varphi,\varepsilon,t)\text{-separated}\right\},\\
P(\varphi,f,\varepsilon) &= \limsup_{t \rightarrow +\infty}\,\frac{1}{t}\,\log\,Z(\varphi,f,\varepsilon,t).
\end{align*}
The \emph{topological pressure} of   $f$ with respect to $\varphi$ is defined as
$$P(\varphi,f) = \lim_{\varepsilon \rightarrow 0}\,P(\varphi,f,\varepsilon).$$

\subsubsection*{New definition}\label{sse:new-definition}

Let $(X,d)$ be a compact metric space and $\psi: \R^+_0 \times X\to X$ a semiflow (possibly not continuous).
 Consider a function $T$ assigning to each $x\in X$ a  sequence $(T_n(x))_{n\in A(x)}$ of nonnegative numbers, where
either $A(x)=\N$ or $A(x)=\{1,\dots,\ell\}$ for some $\ell\in \N$.
We say that $T$ is \emph{admissible}  if there exists $\eta>0$ such that
for all $x\in X$  and all $n\in \N$ with $n+1\in A(x)$ we have
\begin{enumerate}
\item   $T_{n+1}(x)-T_n(x)\geq\eta$;
\item
$\displaystyle T_n(\psi_t(x))=
\begin{cases}T_n(x)-t, & \text{if $T_{n-1}(x)< t < T_n(x)$};\\
T_{n+1}(x),& \text{if $t=T_n(x)$}.
\end{cases}
$
\end{enumerate}
For each   admissible function $T$, $x\in X$, $t>0$ and $0<\delta<\eta/2$, we define
$$J_{t,\delta}^T(x) = [0,t]\setminus\bigcup_{j\in A(x)}]T_j(x)-\delta,T_j(x)+\delta[.$$
Observe that $ J_{t,\delta}^T(x)= [0,t]$, whenever $T_1(x)>t$.
Given $\varepsilon > 0$ and $t \in \mathbb{R}^+$, we say that  $E\subset X$ is  \emph{$(\varphi,\delta,\varepsilon,t)$-separated} if for any $x,y \in X$ with $x \neq y$ there is some $s \in J_{t,\delta}^T(x) $ such that $d\,(\varphi_s(x), \varphi_s(y))>\varepsilon$.
Given a continuous potential $f:X \rightarrow \mathbb{R}$, we define
\begin{align*}
Z^T(\varphi,f,\delta,\varepsilon,t) &= \sup\,\,\left\{\sum_{x \in E}\, \exp\left({\int_0^t f(\varphi_s(x))\,ds}\right) \colon E\, \text{ is finite and }\, (\varphi,\delta,\varepsilon,t)\text{-separated}\right\},\\
P^T(\varphi,f,\delta,\varepsilon) &= \limsup_{t \rightarrow +\infty}\,\frac{1}{t}\,\log\,Z_t^T(\varphi,f,\delta,\varepsilon),\\
P^T(\varphi,f,\delta) &= \lim_{\varepsilon \rightarrow 0}\,P^T(\varphi,f,\delta,\varepsilon).
\end{align*}
Finally, the \emph{$T$-topological pressure} of  $f$ with respect to $\varphi$ is defined as
$$P^T(\varphi,f) = \lim_{\delta \rightarrow 0}P^T(\varphi,f,\delta).$$
Notice that, as in the classical case, the \emph{$T$-topological pressure}  is well defined, because
\begin{enumerate}
\item if $0 < \varepsilon_1 < \varepsilon_2$, then $Z^T(\varphi,f, \varepsilon_1,\delta,t)\geq Z^T(\psi,f, \varepsilon_2,\delta,t)$;
\item if $0 < \varepsilon_1 < \varepsilon_2$, then $P^T(\psi,f,\varepsilon_1,\delta) \geq P^T(\psi,f,\varepsilon_2,\delta)$; 
\item if $0 < \delta_1 < \delta_2$, then $ P^T(\psi,f,\delta_1)\ge P^T(\psi,f,\delta_2)$.
\end{enumerate}
The next result shows that for continuous semiflows  the classical and new notions of topological pressure coincide.

\begin{maintheorem}\label{te.coincide} Let $X$ be a compact metric space, $\varphi$  a continuous semiflow on $X$ and $T$ an admissible function. If  $f:X \rightarrow \mathbb{R}$ is a continuous potential, then $P^T(\varphi,f)=P(\varphi,f).$
\end{maintheorem}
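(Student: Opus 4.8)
The plan is to establish the two inequalities $P^T(\varphi,f)\le P(\varphi,f)$ and $P(\varphi,f)\le P^T(\varphi,f)$ separately. The first one is essentially formal: since $J^T_{t,\delta}(x)\subseteq[0,t]$, every $(\varphi,\delta,\varepsilon,t)$-separated set is a fortiori $(\varphi,\varepsilon,t)$-separated, so the family over which $Z^T(\varphi,f,\delta,\varepsilon,t)$ is a supremum (with weights $\exp(\int_0^t f\circ\varphi_s\,ds)$) is contained in the one defining $Z(\varphi,f,\varepsilon,t)$; hence $Z^T(\varphi,f,\delta,\varepsilon,t)\le Z(\varphi,f,\varepsilon,t)$ for all $\delta,\varepsilon,t$, and letting $t\to\infty$, then $\varepsilon\to0$, then $\delta\to0$ gives the inequality.

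For the reverse inequality I would show that, at the cost of shrinking $\delta$ in terms of $\varepsilon$, deleting the $\delta$-neighbourhoods of the times $T_j(x)$ does not destroy separated sets. Fixing $\varepsilon>0$, I would first use compactness of $X$ and continuity of $\varphi$ to choose $\sigma\in(0,\eta/2)$ with $d(\varphi_u(z),z)<\varepsilon/4$ for all $z\in X$ and all $u\in[0,\sigma)$, and then set $\delta=\sigma/2$, so that $0<\delta<\eta/2$ and $2\delta<\eta$. The semiflow identity $\varphi_s(z)=\varphi_{s-s'}(\varphi_{s'}(z))$ for $s\ge s'$ promotes this to the statement that $|s-s'|<2\delta$ implies $d(\varphi_s(z),\varphi_{s'}(z))<\varepsilon/4$ for every $z\in X$, uniformly in $s,s'$. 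Now take $t\ge\eta$ (which is all that matters for the $\limsup$) and a $(\varphi,\varepsilon,t)$-separated set $E$. Given $x\ne y$ in $E$, pick a witnessing time $s\in[0,t]$ with $d(\varphi_s(x),\varphi_s(y))>\varepsilon$; if $s\notin J^T_{t,\delta}(x)$, then $s$ lies in a unique interval $]T_j(x)-\delta,T_j(x)+\delta[$ (these are pairwise disjoint because the $T_j(x)$ are $\eta$-separated and $2\delta<\eta$), and one replaces $s$ by whichever of $T_j(x)+\delta$, $T_j(x)-\delta$ lies in $[0,t]$ — the case $T_j(x)+\delta>t$ forcing the left choice, which is legitimate since $t\ge\eta>2\delta$. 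This new time $s'$ lies in $J^T_{t,\delta}(x)$ with $|s'-s|<2\delta$, so by the triangle inequality $d(\varphi_{s'}(x),\varphi_{s'}(y))>\varepsilon-\varepsilon/4-\varepsilon/4=\varepsilon/2$. Hence $E$ is $(\varphi,\delta,\varepsilon/2,t)$-separated, and since the exponential weights are unchanged, $Z(\varphi,f,\varepsilon,t)\le Z^T(\varphi,f,\delta,\varepsilon/2,t)$ for every $t\ge\eta$.

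To conclude I would take $\limsup_{t\to\infty}$ to obtain $P(\varphi,f,\varepsilon)\le P^T(\varphi,f,\delta,\varepsilon/2)$, then invoke the monotonicity relations of $P^T$ recorded just before the theorem to bound the right-hand side by $P^T(\varphi,f,\delta)\le P^T(\varphi,f)$; since $\varepsilon>0$ is arbitrary, letting $\varepsilon\to0$ yields $P(\varphi,f)\le P^T(\varphi,f)$.

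I expect the main obstacle to be the time-perturbation argument in the third paragraph. The naive route — uniform continuity of $\varphi$ on $[0,t]\times X$ — fails, because its modulus deteriorates as $t\to\infty$; the key point is to use the semiflow property to reduce a time shift of size $<2\delta$ located anywhere in $[0,t]$ to a time-$0$ perturbation on the compact set $[0,\eta]\times X$, where uniform control is available. A secondary technicality is the treatment of witnessing times near the endpoint $t$, where one must move $s$ to the left rather than to the right; this is precisely where the admissibility gap $\eta$ (through $2\delta<\eta\le t$) is used.
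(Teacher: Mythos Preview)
Your proposal is correct and follows essentially the same route as the paper's proof in Section~\ref{se.proof-Theorem C}: the inequality $P^T\le P$ is formal from $J^T_{t,\delta}(x)\subseteq[0,t]$, and the reverse is obtained by the same time-perturbation argument, using uniform continuity of $\varphi$ in the time variable (your inline derivation via the semiflow identity is exactly the content of the paper's Lemma~\ref{le.alfabeta}) to shift a witnessing time $s$ by at most $2\delta$ into $J^T_{t,\delta}(x)$ while retaining separation at scale $\varepsilon/2$. Your treatment is in fact slightly more explicit than the paper's about the boundary case $s$ near $t$ and about how $\delta$ is tied to $\varepsilon$, but the substance is identical.
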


The previous result motivates our definition of topological pressure for an impulsive semiflow. First of all observe   that given $\psi$ the impulsive semiflow of an impulsive dynamical system  $(X,\varphi,D, I)$, the function $\tau$ assigning to each point in $X$ its sequence of impulsive times is admissible (recall Remark~\ref{taueta}). Therefore, we may define the \emph{topological pressure} of a potential $f:X\to \mathbb R$ with respect to an impulsive semiflow $\psi$ as $P^\tau(\psi,f)$.
In the sequel  we establish a \emph{variational principle} which generalizes \cite[Theorem~C]{ACV15}. Actually, for the particular choice of $f=0$  the next result  gives  \cite[Theorem~C]{ACV15}.

\begin{maintheorem}\label{te.pressure}
Let $X$ be a compact 
metric space and $\psi$ the semiflow of    an impulsive dynamical system $(X,\varphi,D,I) $ for which  (C1)-(C5) hold.
Then for any potential  $f\in  V^*(\psi)$ we have
$$P^\tau(\psi,f)=\sup_{\mu\in \mathcal{M}_{\psi}(X)} \left\{ h_\mu(\psi_1) + \int f\,d\mu\right\},$$
where  $\psi_1$ stands for the time one map of the semiflow $\psi$.
\end{maintheorem}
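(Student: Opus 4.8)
The plan is to prove the two inequalities separately, following the scheme used for the variational principle for topological entropy of impulsive semiflows, \cite[Theorem~C]{ACV15}, but carrying along the weight $\exp\!\big(\int_0^t f\circ\psi_s\big)$. (An alternative route reduces the statement to the classical variational principle for a genuinely continuous model semiflow — obtained by gluing $D$ to $I(D)$ along $I$ — and converts classical pressure into $\tau$-pressure by Theorem~\ref{te.coincide}; the inputs there are the construction of such a model, the resulting correspondence of invariant measures, and $h_\mu(\psi_1)=h_{\widehat\mu}(\widehat\psi_1)$, which is \cite[Theorem~C]{ACV15} for $f\equiv0$.) The key structural fact, extracted from (C1)--(C5), is: consecutive impulsive times are $\eta$-apart (Remark~\ref{taueta}), and, using the continuity of $\tau^*_\xi$ together with the $\xi$-regularity and $\xi$-half-tube conditions, $(s,x)\mapsto\psi_s(x)$ is jointly continuous, with a modulus uniform on compact sets, near every $(s_0,x_0)$ with $\min_n|s_0-\tau_n(x_0)|>0$. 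So $\psi$ is a continuous semiflow away from its $\eta$-separated impulsive times, and the sets $J^\tau_{t,\delta}(x)$ used to define $P^\tau(\psi,f)$ are precisely the times where that uniform continuity is at our disposal.

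For $P^\tau(\psi,f)\ge\sup_\mu\{h_\mu(\psi_1)+\int f\,d\mu\}$ it suffices, the functional being affine under ergodic decomposition, to bound $P^\tau(\psi,f)$ from below by $h_\mu(\psi_1)+\int f\,d\mu$ for an ergodic $\mu\in\mathcal M_\psi(X)$. Fixing $0<\delta<\eta/2$ and $\varepsilon>0$, I would combine the Katok entropy formula for the measurable map $\psi_1$, the Birkhoff theorem for $\psi$ applied to the continuous $f$, and Shannon--McMillan--Breiman, to produce for large $t$ a set $K_t$ with $\mu(K_t)>\tfrac12$ on which the relevant averages are nearly constant, and take $E_t\subset K_t$ a maximal $(\psi,\delta,\varepsilon,t)$-separated set. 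Using the local joint continuity above to pass between $J^\tau_{t,\delta}$-dynamical balls and $\psi_1$-Bowen balls of comparable radius, this gives $\#E_t\ge\exp\!\big(t(h_\mu(\psi_1)-c)\big)$ and $\int_0^t f(\psi_s(x))\,ds\ge t\big(\int f\,d\mu-c\big)$ for all $x\in E_t$, with $c=c(\varepsilon,\delta)\to0$; hence $Z^\tau(\psi,f,\delta,\varepsilon,t)\ge\exp\!\big(t(h_\mu(\psi_1)+\int f\,d\mu-2c)\big)$, and letting $t\to\infty$, then $\varepsilon,\delta\to0$, yields the bound.

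For the reverse inequality, for $0<\delta<\eta/2$, $\varepsilon>0$ and $t_k\to\infty$, take finite $(\psi,\delta,\varepsilon,t_k)$-separated sets $E_k$ with $\Lambda_k:=\sum_{x\in E_k}\exp\!\big(\int_0^{t_k}f\circ\psi_s(x)\big)$ within a factor $2$ of $Z^\tau(\psi,f,\delta,\varepsilon,t_k)$ and along which $\tfrac1{t_k}\log Z^\tau(\psi,f,\delta,\varepsilon,t_k)\to P^\tau(\psi,f,\delta,\varepsilon)$. Put $\sigma_k=\Lambda_k^{-1}\sum_{x\in E_k}\exp\!\big(\int_0^{t_k}f\circ\psi_s(x)\big)\delta_x$, $\mu_k=\tfrac1{t_k}\int_0^{t_k}(\psi_s)_*\sigma_k\,ds$, and let $\mu$ be a weak-$*$ accumulation point of $(\mu_k)$. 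That $\mu\in\mathcal M_\psi(X)$ is the first delicate step: the telescoping $(\psi_r)_*\mu_k-\mu_k\to0$ holds in total variation for each $r>0$, but $\psi_r$ is discontinuous, so invariance of $\mu$ does not follow formally — here one argues with (C2) and (C5) as in \cite[Theorem~A]{AC14} (the auxiliary $\mu(D)=0$ is anyway immediate, since $\psi_r^{-1}(D)=\emptyset$ for every $r>0$). A Misiurewicz-type estimate then finishes: choose a finite partition $\mathcal Q$ with $\operatorname{diam}\mathcal Q<\varepsilon$ and $\mu(\partial\mathcal Q)=0$, refine it along the flow at sampling times lying in $J^\tau_{t_k,\delta}$ — shifting the sampling grid by a generic $\theta$ (a Fubini argument) so as to avoid the impulsive instants, where $\psi$ then has a uniform modulus of continuity and atoms of the refinement that meet $E_k$ contain boundedly many points of $E_k$ once the $O(\delta t_k)$ ``bad'' times have been discarded — and apply $\sum_x p_x\log p_x^{-1}+\sum_x p_x a_x\le\log\sum_x e^{a_x}$ with $a_x=\int_0^{t_k}f\circ\psi_s(x)$; after the customary concavity step over a fixed window and passage to the limit in $k$ (the limit $\mu$ being $\psi_1$-invariant), then $\varepsilon,\delta\to0$, this gives $h_\mu(\psi_1)+\int f\,d\mu\ge P^\tau(\psi,f)$.

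The main obstacle throughout is the passage between the flow $\psi$ on a window $[0,t]$, carrying the $J^\tau_{t,\delta}$-pseudometric that defines $P^\tau$, and its time-one map $\psi_1$, whose metric entropy appears on the right-hand side: because $\psi$ is discontinuous, $\varepsilon$-separation at a time $s$ need not imply separation at $\lfloor s\rfloor$, so one has to localise the argument to the good times $J^\tau_{t,\delta}(x)$ using the $\eta$-separation of the $\tau_n$ and the continuity of $\tau^*_\xi$, absorbing the remaining times, of total length $O(\delta t)$, into errors that vanish only in the final limit $\delta\to0$. Handling the potential is, by contrast, soft: $f$ is bounded, and the requirement $f=f\circ I$ on $D$ in the definition of $V^*(\psi)$ makes $s\mapsto f(\psi_s(x))$ continuous across the impulsive times, so the weights $\exp\!\big(\int_0^t f\circ\psi_s\big)$ ride through all the constructions above with the same $O(\delta t)$ error and no further difficulty. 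For $f\equiv0$ this is exactly \cite[Theorem~C]{ACV15}.
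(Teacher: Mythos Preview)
Your parenthetical ``alternative route'' is exactly what the paper does. The proof in the paper passes to the quotient continuous semiflow $\widetilde\psi$ on $\widetilde X_\xi$ (Section~\ref{sec.quotient}), shows via Lemmas~\ref{le.equal1}--\ref{le.equal3} and Theorem~\ref{te.coincide} that $P^\tau(\psi,f)=P(\widetilde\psi,\widetilde f)$, invokes the classical variational principle~\eqref{eq.pressure} for the continuous $\widetilde\psi$, and then transfers the supremum back to $\mathcal M_\psi(X)$ using the measure-theoretic bijections of Lemmas~\ref{le.isomorphism} and~\ref{le.isomorphism2} together with~\eqref{eq.cincum}--\eqref{eq.cincois}. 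No Katok formula, no Misiurewicz argument for $\psi$ itself, no weak-$*$ limits of empirical measures for the impulsive flow are needed: all the hard analysis is outsourced to the continuous model, where it is standard. The advantage is precisely that the $x$-dependence of $J^\tau_{t,\delta}(x)$ and the discontinuity of $\psi_1$ never have to be confronted directly.

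Your primary, direct strategy is a genuinely different route, and while it is the natural ``from scratch'' plan, it has a real gap in the upper-bound half. The step ``atoms of the refinement that meet $E_k$ contain boundedly many points of $E_k$ once the $O(\delta t_k)$ bad times have been discarded'' is not justified and, as stated, fails: discarding bad sampling times makes the refinement \emph{coarser}, and since the bad set $[0,t_k]\setminus J^\tau_{t_k,\delta}(x)$ depends on $x$, no single shift $\theta$ of the integer grid can avoid the impulsive instants of all $x\in E_k$ simultaneously (there are up to $t_k/\eta$ such instants per point and exponentially many points). What one would actually need is that two points lying in the same atom of the full refinement $\bigvee_{i<t_k}\psi_1^{-i}\mathcal Q$ are automatically not $(\psi,\tau,\delta,\varepsilon,t_k)$-separated; this is plausible---closeness at all integer times together with (C5) should force the impulsive times to nearly coincide, hence $J^\tau_{t_k,\delta}(x)\approx J^\tau_{t_k,\delta}(y)$---but it requires an inductive argument using the continuity of $\tau^*_\xi$ that you have not supplied, and it is exactly the kind of computation the quotient construction is designed to absorb. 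The invariance of the weak-$*$ limit is a second genuine issue: \cite[Theorem~A]{AC14} produces \emph{some} invariant measure, and its proof does proceed via limits of time-averages, but checking that the specific weighted empirical measures $\mu_k$ you build fall under that argument is not automatic. Both obstacles disappear once one works with $\widetilde\psi$, which is why the paper takes that route.
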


Under expansivity and specification assumptions, it follows from of Theorem A that  the supremum in this last result  is attained.  As $\Omega_{\psi_1} \subseteq \Omega_\psi$ and for any probability measure $\mu \in \mathcal{M}_{\psi}(X)$ we have $\mu(D)=0$ (see \cite[Lemma 4.7]{AC14}), then
$$h_\mu(\psi_1) + \int f\,d\mu= h_\mu({\psi_1}|_{\Omega_\psi\setminus D}) + \int_{\Omega_\psi\setminus D} f\,d\mu,$$
for any $\mu \in \mathcal{M}_{\psi}(X)$, and so
it follows from Theorem~\ref{te.pressure} that
$$P^\tau(\psi,f)= P^\tau(\psi|_{\Omega_\psi\setminus D},f|_{\Omega_\psi\setminus D}).
$$

\section{Classical and new pressure}\label{se.proof-Theorem C}
Here we prove that the modified definition of topological pressure coincides with the classical one for continuous semiflows and continuous potentials defined on compact metric spaces, thus proving Theorem~\ref{te.coincide}.

Let $(X,d)$ be a compact metric space, $\varphi$  a continuous semiflow on $X$ and $f:X \rightarrow \mathbb{R}$  a continuous potential. Given  $T$  admissible, consider $\eta>0$ as  in the definition of an admissible function
and fix constants $0<\delta<\eta/2$, $\varepsilon> 0$ and $t>0$. Notice that for every $x\in X$ we have
$$Z^T(\varphi,f,\delta,\varepsilon,t) \leq Z(\varphi,f,\varepsilon,t),$$
and so
$$P^T(\varphi,f)\leq P(\varphi,f).$$
We will now prove the reverse inequality. We start by stating  a useful lemma whose proof can be found in~\cite[Lemma~2.1]{ACV15}.

\begin{lemma}\label{le.alfabeta} Let $\varphi$ be a continuous semiflow on a compact metric space~$X$. For each $\varepsilon>0$ there is $\alpha>0$ such that $\dist(\varphi_s(x),\varphi_u(x))<\varepsilon$ for all $x\in X$ and  $s,u\geq 0$ with $|s-u|<\alpha$.
\end{lemma}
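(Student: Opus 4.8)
The plan is to use the semiflow identity to trade the noncompact time domain $\R^+_0$ for the compact space $X$. The elementary but crucial observation is that, for $s\le u$ with $h=u-s$, the semiflow property $\varphi_{s+h}=\varphi_h\circ\varphi_s$ gives
$$\dist(\varphi_s(x),\varphi_u(x))=\dist\bigl(\varphi_s(x),\varphi_h(\varphi_s(x))\bigr)=\dist(y,\varphi_h(y)),$$
where $y=\varphi_s(x)\in X$. Hence the quantity we must control depends on $s$ and $x$ only through the single point $y$, and it suffices to prove the following uniform statement: for each $\varepsilon>0$ there is $\alpha>0$ such that $\dist(y,\varphi_h(y))<\varepsilon$ for all $y\in X$ and all $0\le h<\alpha$. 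Granting this, the lemma follows by applying it with $y=\varphi_s(x)$ and $h=|s-u|<\alpha$ (the roles of $s$ and $u$ being symmetric), since $\alpha$ will be chosen independently of $y$.

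To establish the reduced statement I would consider the function $g:[0,1]\times X\to\R$ defined by $g(h,y)=\dist(y,\varphi_h(y))$. This is continuous, being a composition of the continuous semiflow with the continuous metric, and it satisfies $g(0,y)=0$ for every $y$ because $\varphi_0=\id$. First I would argue by contradiction: were there no admissible $\alpha$, one would obtain some $\varepsilon>0$ together with sequences $h_n\to 0$ in $[0,1]$ and $y_n\in X$ with $g(h_n,y_n)\ge\varepsilon$. Using compactness of $X$, I would pass to a subsequence along which $y_n\to y$; then $(h_n,y_n)\to(0,y)$, and continuity of $g$ forces $g(h_n,y_n)\to g(0,y)=0$, contradicting $g(h_n,y_n)\ge\varepsilon$. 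This produces the desired $\alpha$, uniform over all $y\in X$.

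The hard part is conceptual rather than technical: recognizing that the obstruction is the noncompactness of the time axis, and that the semiflow law removes it by collapsing the dependence on $(s,x)$ to the single base point $y=\varphi_s(x)$. Once this reduction is in place, the remaining argument is a standard compactness-and-continuity estimate on the compact product $[0,1]\times X$. The only point requiring care is to confirm that the $\alpha$ obtained from the compactness argument is genuinely independent of $y$, so that it may legitimately be applied with the a priori unbounded parameter $s$ hidden inside $y=\varphi_s(x)$.
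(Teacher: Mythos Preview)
Your proof is correct. The paper does not actually supply its own argument for this lemma; it simply cites \cite[Lemma~2.1]{ACV15} and moves on. Your reduction via the semiflow identity $\varphi_u(x)=\varphi_{u-s}(\varphi_s(x))$ to the statement $\dist(y,\varphi_h(y))<\varepsilon$ for all $y\in X$ and $0\le h<\alpha$, followed by a compactness argument on $[0,1]\times X$, is exactly the standard route and almost certainly what appears in the cited reference.
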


Consider an arbitrary  $\varepsilon>0$. By Lemma~\ref{le.alfabeta} there exists $\alpha>0$ such that for all $z\in X$ and all $s,u>0$ with $|s-u|<\alpha$, we have
\begin{equation}\label{eq.phiz}
 \dist(\varphi_s(z),\varphi_u(z))<\varepsilon/4.
\end{equation}
Hence, if $x,y\in X$ and 
$s\geq0$ satisfy
\begin{equation}\label{eq.phit}
\dist(\varphi_s(x),\varphi_s(y))>\varepsilon,
\end{equation}
then, for every $u\in (s-\alpha,s+\alpha)$, we get
$$\dist(\varphi_s(x),\varphi_s(y))\le \dist(\varphi_s(x),\varphi_u(x))+\dist(\varphi_u(x),\varphi_u(y))+\dist(\varphi_u(y),\varphi_s(y))$$
which, together with \eqref{eq.phiz}  and \eqref{eq.phit}, implies
\begin{equation}\label{eq:contunif}
\dist(\varphi_u(x),\varphi_u(y)) >\varepsilon/2.
\end{equation}
Consider  now $E\subseteq X$ being $(\varphi, t, \varepsilon)$-separated. As $\varphi$ is continuous, the set $E$ is finite. By definition, for every $x,y\in E$, $x\ne y$, there exists $s\in [0,t]$ such that
$$\dist(\varphi_s (x),\varphi_s (y))\geq\varepsilon.$$
Choose $0<\delta<\min\{\eta, \alpha/2,\varepsilon\}$ and $0<{\varepsilon'}<\varepsilon/2$. By \eqref{eq:contunif}, if  $u\in (s-2\delta,s+2\delta)$, then
$$\dist(\varphi_u(x),\varphi_u(y))>\varepsilon/2 >{\varepsilon'}.$$
If $s\in\,J_{t,\delta}^T(x)$ for some $t>0$, then $y\notin  B^T(x,\varphi, \delta, {\varepsilon'},t)$, where
$$B^T(x,\varphi, \delta, {\varepsilon'},t)=\left\{z\in X:\dist(\psi_s (x),\psi_s (z))< {\varepsilon'},\: \forall s\in J_{t,\delta}^T(x) \right\}.$$
Otherwise,  $J_{t,\delta}^T(x) \cap(s-2\delta,s+2\delta)\ne\emptyset$, and then $y\notin  B^T(x,\varphi, \delta,{\varepsilon'},t)$. So, $E$ is $(\varphi,T, \delta, {\varepsilon'},t)$-separated. Hence
$$Z(\varphi, \varepsilon, t)\leq Z^T(\varphi, \delta, {\varepsilon'},t), $$
and so
$$\frac1{t}\log Z(\varphi, f,\varepsilon, t) \leq \frac1{t}\log  Z^T(\varphi, f,\delta, {\varepsilon'},t).$$
Taking the upper limit as $t\to +\infty$, we get
$$P(\varphi,f,\varepsilon)\leq P^T(\varphi,f, \delta, {\varepsilon'}).$$
Now,  taking $\varepsilon'\to0$ we obtain
$$P(\varphi,f,\varepsilon)\leq P^T(\varphi,f,\delta).$$
Noticing that 
$\delta\to 0$ when $\varepsilon\to 0$, we have
$$P(\varphi,f) \leq P^T(\varphi,f).$$
This finishes the proof of Theorem~\ref{te.coincide}.

\section{Refinements and semiconjugacies}\label{se.refinement}

Given $T$ and $T'$ two admissible functions in $X$, we say that~$T'$ \emph{refines} $T$, and write $T'\succ T$, if for all $x\in X$ and $n\in\N$ there exists $m=m(n,x)\in\N$ such that $T_n(x)=T'_m(x)$. Our new concept of topological pressure is monotone with respect to the refinement of admissible functions and invariant by semiflow equivalences that respect the fixed admissible functions. The proofs of these properties are given in the two lemmas below and differ only in minor details from  analogous results in \cite[Section~2.2]{ACV15}.

\begin{lemma}\label{le.refine_invariance} Let $\psi$ be a semiflow on $ X$. If $T$ and $T'$ are admissible functions such that $T'\succ T$, then $P^T(\psi, f)\geq P^{T'}(\psi,f)$ for any  continuous potential  $f:X \rightarrow \mathbb{R}$.
\end{lemma}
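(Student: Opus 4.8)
The plan is to establish the inequality scale by scale by showing that the families of separated sets are nested. Fix constants $\eta,\eta'>0$ witnessing the admissibility of $T$ and $T'$ respectively, and note that for every $0<\delta<\min\{\eta,\eta'\}/2$ both $P^T(\psi,f,\delta)$ and $P^{T'}(\psi,f,\delta)$ are defined. Fix such a $\delta$, together with $\varepsilon>0$ and $t>0$.

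The crucial observation is that, for every $x\in X$ and every $t>0$,
$$J_{t,\delta}^{T'}(x)\subseteq J_{t,\delta}^{T}(x).$$
Indeed, $T'\succ T$ means precisely that for each $x$ the set of numbers $\{T_n(x)\}_{n\in A(x)}$ is contained in $\{T'_m(x)\}_{m\in A'(x)}$; hence the union of $\delta$-neighbourhoods removed from $[0,t]$ to form $J_{t,\delta}^{T'}(x)$ contains the union removed to form $J_{t,\delta}^{T}(x)$, and removing a larger set from $[0,t]$ leaves a smaller one. Consequently, if a finite set $E\subseteq X$ is $(\psi,\delta,\varepsilon,t)$-separated with respect to $T'$, it is also $(\psi,\delta,\varepsilon,t)$-separated with respect to $T$: for $x\ne y$ in $E$, any separating instant $s\in J_{t,\delta}^{T'}(x)$ with $d(\psi_s(x),\psi_s(y))>\varepsilon$ also lies in $J_{t,\delta}^{T}(x)$. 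Therefore the supremum defining $Z^{T'}(\psi,f,\delta,\varepsilon,t)$ ranges over a subfamily of the sets appearing in the supremum defining $Z^{T}(\psi,f,\delta,\varepsilon,t)$, so $Z^{T'}(\psi,f,\delta,\varepsilon,t)\le Z^{T}(\psi,f,\delta,\varepsilon,t)$.

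Taking $\tfrac1t\log(\cdot)$ and then $\limsup_{t\to+\infty}$ gives $P^{T'}(\psi,f,\delta,\varepsilon)\le P^{T}(\psi,f,\delta,\varepsilon)$; letting $\varepsilon\to 0$ gives $P^{T'}(\psi,f,\delta)\le P^{T}(\psi,f,\delta)$; and letting $\delta\to 0$ along values below $\min\{\eta,\eta'\}/2$ yields $P^{T'}(\psi,f)\le P^{T}(\psi,f)$, as desired.

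I do not expect a serious obstacle: this is essentially a monotonicity bookkeeping argument, differing from \cite[Section~2.2]{ACV15} only in the presence of the potential $f$, which plays no active role. The only points needing a little care are matching the two admissibility constants $\eta$ and $\eta'$ (observing that admissibility of $T'$ is a hypothesis, not a consequence of that of $T$, since refining subdivides the gaps between consecutive impulsive times) and treating uniformly the cases $A(x)=\N$ and $A(x)=\{1,\dots,\ell\}$, which causes no difficulty because the argument uses only the inclusion of the underlying sets of impulsive times.
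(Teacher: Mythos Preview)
Your proof is correct and follows essentially the same route as the paper's: a $(\psi,T',\delta,\varepsilon,t)$-separated set is automatically $(\psi,T,\delta,\varepsilon,t)$-separated because $J_{t,\delta}^{T'}(x)\subseteq J_{t,\delta}^{T}(x)$, whence $Z^{T'}\le Z^{T}$ and the pressure inequality follows by passing to the limits. You simply spell out more of the bookkeeping (the inclusion of the $J$-sets and the care with $\min\{\eta,\eta'\}/2$) than the paper does.
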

\begin{proof} Given $\varepsilon ,t>0$, $0<\delta <\eta/2$ and a finite $(\psi,T',   \delta, \varepsilon, t)$-separated subset $E$, as $T'\succ T$, the set $E$ is a $(\psi,T, \delta ,  \varepsilon, t)$-separated as well. Therefore
$$Z^{T'}(\psi, \delta,\varepsilon,t)\leq Z^T(\psi, \delta,\varepsilon,t),$$
which clearly yields our conclusion.
\end{proof}

Given two semiflows $\psi:\R^+_0\times X \rightarrow X$ and $ {\psi}':\R^+_0\times {X}' \rightarrow {X}'$, acting on the metric spaces $(X,d)$ and $( {X}',d')$, and two admissible functions $T$ and $T'$ defined on $X$ and $X'$, respectively, we say that a uniformly continuous surjective map $h:X \rightarrow X'$ is a \emph{$(T,T')$-semiconjugacy} between $\psi$ and $ \psi'$ if
\begin{enumerate}
\item $ {\psi}'_t\circ h=h\circ \psi_t$ for all $ t\geq 0$;
\item $ {T}'(h(x))=T(x)$ for all $x \in X.$
\end{enumerate}

\begin{lemma}\label{le.invariant} Let $h:X\to X'$ be a $(T,T')$-semiconjugacy between the semiflows $\psi$ and ${\psi'}$ on $X$ and ${X'}$, respectively,   such that the pre-image of each point under $h$ is finite. Then $P^T(\psi,f\circ h)\ge P^{{T'}}({\psi'},f)$ for any  continuous potential $f:{X'} \rightarrow \mathbb{R}$.
\end{lemma}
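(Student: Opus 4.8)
The plan is to transfer $(\psi',T',\delta,\varepsilon,t)$-separated sets upstairs through $h$ and compare the exponential sums, exactly paralleling the argument for Lemma~\ref{le.refine_invariance} but accounting for the finite fibers of $h$ and the distortion introduced by uniform continuity. First I would fix a continuous potential $f:X'\to\mathbb R$, together with $\varepsilon,t>0$ and $0<\delta<\eta/2$. Since $h$ is uniformly continuous, for this $\varepsilon$ there is $\varepsilon_0>0$ such that $d(x,y)<\varepsilon_0$ implies $d'(h(x),h(y))<\varepsilon$. Now take a finite $(\psi',T',\delta,\varepsilon,t)$-separated set $E'\subset X'$ realizing (or nearly realizing) the supremum in $Z^{T'}(\psi',f,\delta,\varepsilon,t)$. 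For each $z'\in E'$ pick one preimage $z\in h^{-1}(z')$ and let $E\subset X$ be the resulting set; by injectivity of this selection $|E|=|E'|$.

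The key point is that $E$ is $(\psi,T,\delta,\varepsilon_0,t)$-separated. Indeed, take distinct $z_1,z_2\in E$ with images $z_1',z_2'\in E'$. Because $T'(h(z_i))=T(z_i)$, the sets $J^{T}_{t,\delta}(z_i)$ and $J^{T'}_{t,\delta}(z_i')$ coincide. Separation of $E'$ gives some $s\in J^{T'}_{t,\delta}(z_1')=J^{T}_{t,\delta}(z_1)$ with $d'(\psi'_s(z_1'),\psi'_s(z_2'))>\varepsilon$; using the conjugacy $\psi'_s\circ h=h\circ\psi_s$ this reads $d'(h(\psi_s(z_1)),h(\psi_s(z_2)))>\varepsilon$, which by the choice of $\varepsilon_0$ forces $d(\psi_s(z_1),\psi_s(z_2))>\varepsilon_0$. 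Hence $E$ is $(\psi,T,\delta,\varepsilon_0,t)$-separated, as claimed. Moreover, again by the conjugacy, $\int_0^t (f\circ h)(\psi_s(z))\,ds=\int_0^t f(\psi'_s(h(z)))\,ds=\int_0^t f(\psi'_s(z'))\,ds$, so the exponential weights of $z\in E$ and $z'\in E'$ agree termwise. Therefore
\[
Z^{T'}(\psi',f,\delta,\varepsilon,t)\ \le\ Z^{T}(\psi,f\circ h,\delta,\varepsilon_0,t).
\]

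From here the conclusion follows by taking limits in the standard order. Dividing by $t$, taking $\limsup_{t\to\infty}$, and noting that $\varepsilon_0\to 0$ as $\varepsilon\to 0$ (with $\delta$ fixed), we get $P^{T'}(\psi',f,\delta,\varepsilon)\le P^{T}(\psi,f\circ h,\delta,\varepsilon_0)$ and hence, letting $\varepsilon\to 0$, $P^{T'}(\psi',f,\delta)\le P^{T}(\psi,f\circ h,\delta)$; finally letting $\delta\to 0$ yields $P^{T'}(\psi',f)\le P^{T}(\psi,f\circ h)$, which is the asserted inequality. The only mild subtlety — and the step I expect to need the most care — is the interplay between the one-sided preimage selection and separation: one must be sure that choosing a single representative in each fiber cannot collapse two distinct points of $E'$ and cannot destroy separation, which is exactly why no factor involving $\#h^{-1}(\cdot)$ appears (it would only be needed for the reverse inequality). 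The finiteness of fibers is used only to guarantee that $E$ is finite whenever $E'$ is, keeping all the sums well defined; no quantitative bound on fiber cardinality enters.
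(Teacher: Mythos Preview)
Your argument is correct and follows the same route as the paper: pull a finite $(\psi',T',\delta,\varepsilon,t)$-separated set back through $h$, use uniform continuity to obtain separation upstairs with a smaller radius, identify the Birkhoff integrals via the semiconjugacy, and pass to the limits. The only difference is cosmetic: the paper lifts the \emph{entire} preimage $h^{-1}(E')$ (hence genuinely needs the finite-fiber hypothesis to keep the lifted set finite), whereas you select a single representative in each fiber, which makes the separation check cleaner and, as you observe, renders the finite-fiber assumption essentially superfluous for this direction.
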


\begin{proof}
Let $\psi:\R^+_0\times X \rightarrow X$ and ${\psi'}:\R^+_0\times {X'} \rightarrow {X'}$ be two semiconjugate semiflows and $h$ be such a semiconjugacy. As $h$ is uniformly continuous, given $\varepsilon >0$ there exists $\varepsilon'>0$ such that for $a,b \in X$
$$d(a,b)<\varepsilon' \quad \Rightarrow\quad {d'}(h(a), h(b))<\varepsilon .$$
Fix $t>0$ and $0<\delta<\eta/2$, and consider a finite $({\psi'},{T'}, t, \varepsilon,\delta)$-separated set $B\subseteq {X'}$. Then $A=h^{-1}(B)$ is finite, although it may have a cardinal bigger or equal than the one of $B$. Moreover, $A$ is a $(\psi,T, t, \varepsilon',\delta)$-separated set of $X$. Indeed, for all $a,b \in A$, there are $t_n \in \,J_{t,\delta}^{{T'}}(h(a))$ and $s_n \in \,J_{t,\delta}^{{T'}}(h(b))$ such that
$${d'}({\psi'}_{t_n}(h(a)), {\psi'}_{t_n}(h(b)))\geq \varepsilon \quad \text{and} \quad {d'}({\psi'}_{s_n}(h(a)), {\psi'}_{s_n}(h(b)))\geq \varepsilon$$
that is,
$${d'}(h\circ \psi_{t_n}(a), h\circ \psi_{t_n}(b))\geq \varepsilon \quad \text{and} \quad {d'}(h\circ \psi_{s_n}(a), h\circ \psi_{s_n}(b))\geq \varepsilon.$$
Therefore,
$$d (\psi_{t_n}(a), \psi_{t_n}(b))\geq \varepsilon' \quad \text{and} \quad d(\psi_{s_n}(a), \psi_{s_n}(b))\geq \varepsilon'.$$
Taking into account that, by definition of semiconjugacy, $t_n \in \,J_{t,\delta}^T(a)$ and $s_n \in \,J_{t,\delta}^T(b)$, we deduce that
$$Z^T(\psi, f\circ h,\delta,\varepsilon',t)\geq Z^{{T'}}({\psi'}, f,\delta,\varepsilon,t).$$
Noticing that  $\varepsilon' \rightarrow 0$ when $\varepsilon \rightarrow 0$, we finally conclude our proof.
\end{proof}

\section{Quotient dynamics}\label{sec.quotient}

Given  an impulsive dynamical system $(X,\varphi,D,I)$, consider  the equivalence relation $\sim$ on $X$ given by
 \begin{equation}\label{eq.tilde}
x\sim y\quad \Leftrightarrow \quad x=y, \quad y=I(x),\quad x=I(y)\quad \text{or}\quad I(x)=I(y).
\end{equation}
Let  $\widetilde X$ denote the quotient space,  $\tilde x$   the equivalence class of a point   $x\in X$ and  $\pi:X\to \widetilde X$    the natural projection. It follows from \cite[Lemma~4.1]{AC14} that $\widetilde X$ is a metrizable space. Actually, if $d$ denotes the metric on $X$,  a metric $\tilde d$ on $\widetilde X$ that induces the quotient topology is given by
$$ \tilde d \, (\tilde x,\tilde y) = \inf \, \{d\,(p_1, q_1)+ d\,(p_2, q_2)+\cdots +d\,(p_n,q_n)\},$$
where $p_1, q_1, \dots, p_n, q_n$ is any chain of points in $X$ such that
$ p_1 \sim x$, $q_1 \sim p_2$, $q_2 \sim p_3$, ... $q_n \sim y$.
In particular, we have for all $x,y\in X$
\begin{equation}\label{eq.dtil}
\tilde d \, (\tilde x,\tilde y) \leq d\,(x,y).
\end{equation}
In general, an inequality in the opposite direction  is much more complicate. In the case that $I$ does not expand distances we have the following result whose proof may be found in \cite[Lemma 4.1]{ACV15}.

\begin{lemma}\label{le.chain}
If $\lip(I)\le 1$, then for all $\,\tilde x, \tilde y \in \pi(X)$ there exist $p, q \in X$ such that
$p \sim x$, $  q \sim y$ and $ d(p,q) \leq 2\,\tilde d\,(\tilde x, \tilde y).$
\end{lemma}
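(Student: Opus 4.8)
The plan is to unwind the definition of the quotient metric $\tilde d$ and use the hypothesis $\lip(I)\le 1$ to collapse any chain witnessing $\tilde d(\tilde x,\tilde y)$ down to a single jump of comparable length. First I would fix $\tilde x,\tilde y\in\pi(X)$ and, given $\epsilon>0$, choose a chain $p_1,q_1,\dots,p_n,q_n$ in $X$ with $p_1\sim x$, $q_i\sim p_{i+1}$, $q_n\sim y$, and
$$\sum_{i=1}^n d(p_i,q_i) < \tilde d(\tilde x,\tilde y)+\epsilon.$$
The point is that each equivalence $q_i\sim p_{i+1}$ means one of: $q_i=p_{i+1}$; $p_{i+1}=I(q_i)$; $q_i=I(p_{i+1})$; or $I(q_i)=I(p_{i+1})$. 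I would like to argue that moving along the chain, the accumulated distance in $X$ between the ``initial'' representative and the ``current'' representative grows by at most the sum of the $d(p_i,q_i)$, possibly after applying $I$ once.

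The key observation is that each of the four relation types, when combined with $\lip(I)\le 1$, is distance–nonincreasing in the appropriate sense. Concretely I would track a running representative: set $a_1=p_1$, and having produced $a_i\sim p_i$ (or $I(a_i)\sim I(p_i)$), pass through the link $p_i\rightsquigarrow q_i$ (which costs $d(p_i,q_i)$) and then through $q_i\sim p_{i+1}$. In the cases $q_i=p_{i+1}$ or $q_i=I(p_{i+1})$ or $p_{i+1}=I(q_i)$, one checks directly that either $q_i$ itself or its image under $I$ equals $p_{i+1}$ or $I(p_{i+1})$, so no distance is added and the Lipschitz bound keeps images contracted. In the case $I(q_i)=I(p_{i+1})$, one cannot stay at the ``point'' level, but one can pass to the ``image'' level: $d(I(q_i),I(p_{i+1}))=0$. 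So the cleanest bookkeeping is to prove: for any chain as above there exist $p\sim x$ and $q\sim y$ with either $d(p,q)\le\sum d(p_i,q_i)$ or, after applying $I$ where needed, $d(I(p),I(q))\le\sum d(p_i,q_i)$ together with control of $d(p,q)$ via $\lip(I)\le 1$ — which is exactly where the factor $2$ enters, since $p$ and $q$ may each be $I$-related to the true chain endpoints and one pays $d(p_i,q_i)$ on both the pre-image side and in matching $p\sim x$, $q\sim y$.

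I expect the main obstacle to be the combinatorial case analysis handling how the four types of $\sim$-links compose along the chain, and in particular ensuring that the ``where $I$ has been applied'' state can always be maintained consistently so that one never needs to invert $I$ (which need not be injective or Lipschitz). The honest way to organize this is by induction on the chain length $n$: the base case $n=1$ is immediate from \eqref{eq.dtil}-type reasoning and the definition of $\sim$, and for the inductive step one absorbs the first link $p_1\sim x$ and $q_1\sim p_2$ into a new chain for $\tilde x':=\widetilde{q_1}=\widetilde{p_2}$, applying $\lip(I)\le 1$ to transfer the bound across any $I$ that must be applied. Rather than reprove this from scratch, I would simply invoke \cite[Lemma 4.1]{ACV15} as stated in the excerpt, since the claim is verbatim that lemma; the ``proof'' here is the remark that the relevant chain-shortening argument applies unchanged, with $\lip(I)\le 1$ supplying the nonexpansion of $I$ at each step and the constant $2$ accounting for the at-most-two auxiliary $I$-links at the two ends of the chain.
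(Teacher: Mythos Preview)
Your proposal is correct and aligns with the paper's own treatment: the paper does not prove this lemma but simply cites \cite[Lemma~4.1]{ACV15}, which is exactly what you end up doing after your informal sketch of the chain-shortening argument. Your outline of the induction on chain length, using $\lip(I)\le 1$ to absorb the $\sim$-links without expansion and picking up the factor~$2$ at the endpoints, is the right intuition for how that cited proof goes.
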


Now take $\xi>0$ such that conditions (C1)-(C5) hold. Since $I(D)\cap D = \emptyset$, then each point in the set $X_\xi=X\setminus (D_\xi\cup D)$ has a representative of its equivalence class in $X\setminus D_\xi $. This implies that
\begin{equation}\label{eq.xxi}
\pi(X_\xi)=\pi(X\setminus D_\xi),
\end{equation}
and by the $\xi$-regular condition (C3) this is a compact set. As we are assuming that $\varphi$ satisfies a $\xi$-half-tube condition, then $X_\xi$ is invariant under~$\psi$; recall \eqref{eq.forinva}. With no risk of confusion, the restriction of $\psi$ to $X_\xi$ will still be denoted by~$\psi$.

Given $x,y\in X_\xi$ we have $x\sim y$ if and only if $x=y$. So $\pi\vert_{X_\xi}$ induces a continuous bijection from $X_\xi$ onto the set
$$\widetilde X_\xi=\pi(X_\xi)$$
that we shall denote by $H$. The map $H$ allows us to introduce a semiflow
$\widetilde\psi$ on $ \widetilde X_\xi $
given by
\begin{equation}\label{def.psi}
\widetilde\psi(t,\tilde x)=H \circ\psi(t,x),
\end{equation}
for
all $x\in X_\xi$ and   $t\ge 0$. Since the impulsive semiflow $\psi$ satisfies conditions (C1) and (C5), it follows from  \cite[Lemma 4.2]{ACV15}  that the semiflow $\widetilde\psi$ is continuous. Moreover, $H$ gives a semiconjugacy between the semiflows, i.e.
\begin{equation}\label{eq.conjuga2}
\widetilde\psi_t( H (x) )= H(\psi_t (x))
\end{equation}
for
all $x\in X_\xi$ and  $t\ge 0$.

 We are looking for measures of maximal entropy for the impulsive semiflow $\psi$ and, more generally, equilibrium states for  potentials in $V^*(\psi)$. The strategy is to bring to $X$, via $H$, equilibrium states for the continuous quotient dynamics $\widetilde{\psi}$ defined on the compact quotient metric space $\widetilde{X}_\xi$. For this purpose, we will need to carry the expansiveness and periodic specification properties from the impulsive semiflow $\psi$ to the quotient semiflow~$\widetilde \psi$. 
However, 
condition (C3) is incompatible with the periodic specification property for $\psi$ in~$X_\xi$. This is why we will restrict the impulsive semiflow $\psi$ to $\Omega_\psi \setminus D$, a set that is contained in $X_\xi$ for every $0<\xi < \xi_0$. Recall that, as $\psi$ is not continuous, its non-wandering set $\Omega_\psi$, although closed, may be not $\psi$-invariant. Yet, condition (C2) guarantees that $\Omega_\psi \setminus D$ is $\psi$-invariant (see \cite[Theorem B]{AC14}) and, moreover, that $\pi(\Omega_\psi \setminus D)=\pi(\Omega_\psi)$. 
In what follows we will consider  $\widetilde \psi$ restricted to the compact set
$$\widetilde Y=\pi(\Omega_\psi \setminus D).$$

\begin{lemma}\label{le.properties}
Let $\psi$ be an impulsive semiflow. 
\begin{enumerate}
\item  If $\psi:\mathbb{R}^+_0 \times (\Omega_\psi \setminus D) \rightarrow \Omega_\psi \setminus D$ is expansive, then $\widetilde{\psi}:\mathbb{R}^+_0 \times \widetilde Y\rightarrow \widetilde Y$ is expansive.
\item  If $\psi:\mathbb{R}^+_0 \times (\Omega_\psi \setminus D) \rightarrow \Omega_\psi \setminus D$ has the (periodic) specification property, then  $\widetilde{\psi}:\mathbb{R}^+_0 \times \widetilde Y \rightarrow \widetilde Y$ has the (periodic) specification property.
\end{enumerate}
\end{lemma}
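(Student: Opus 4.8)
The plan is to transfer both dynamical properties across the bijection $H:(\Omega_\psi\setminus D)\to\widetilde Y$ using the two estimates relating the metrics $d$ and $\tilde d$: the easy contraction bound~\eqref{eq.dtil}, which gives $\tilde d(\tilde x,\tilde y)\le d(x,y)$ for all $x,y$, and Lemma~\ref{le.chain}, which under hypothesis (C1) provides, for any $\tilde x,\tilde y$, representatives $p\sim x$, $q\sim y$ with $d(p,q)\le 2\tilde d(\tilde x,\tilde y)$. Throughout I will use that $H$ intertwines the semiflows via~\eqref{eq.conjuga2}, that $H$ is a continuous bijection onto the compact set $\widetilde Y$ (so its inverse is automatically uniformly continuous), and that a point $\tilde z$ lies in $\varepsilon$-distance of $\pi(D)$ in $\widetilde X_\xi$ precisely when its $\psi$-representative in $\Omega_\psi\setminus D$ lies close to $D$ in $X$ — here one needs to be a little careful, since points of $D$ have representatives in $I(D)$ under $\sim$, so ``close to $\pi(D)$'' and ``close to $D$'' must be compared using Lemma~\ref{le.chain} and the separation $I(D)\cap D=\emptyset$.

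\emph{Expansiveness.} Suppose $\psi|_{\Omega_\psi\setminus D}$ is expansive and fix $\delta>0$. Apply the definition of expansiveness for $\psi$ to obtain $\varepsilon_0>0$; then shrink to $\varepsilon>0$ so that (i) $\varepsilon<\varepsilon_0/2$, (ii) the $\varepsilon$-neighborhood $B_\varepsilon(\pi(D))$ in $\widetilde X_\xi$ pulls back under $H^{-1}$ into $B_{\varepsilon_0}(D)$ (possible because $H^{-1}$ is uniformly continuous and $D$ is at positive distance from $I(D)$), and (iii) $2\varepsilon<\varepsilon_0$. Given $\tilde x,\tilde y\in\widetilde Y$ and a reparametrization $s$ with $\tilde d(\widetilde\psi_t(\tilde x),\widetilde\psi_{s(t)}(\tilde y))<\varepsilon$ whenever both points avoid $B_\varepsilon(\pi(D))$, pick the unique preimages $x,y\in\Omega_\psi\setminus D$. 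Whenever $\psi_t(x),\psi_{s(t)}(y)\notin B_{\varepsilon_0}(D)$, their images under $H$ avoid $B_\varepsilon(\pi(D))$ by~(ii), so $\tilde d(\widetilde\psi_t(\tilde x),\widetilde\psi_{s(t)}(\tilde y))<\varepsilon$; applying Lemma~\ref{le.chain} to this pair of classes and using that within $X_\xi$ the only representative is the point itself, one upgrades this to $d(\psi_t(x),\psi_{s(t)}(y))<2\varepsilon<\varepsilon_0$. (One must check that Lemma~\ref{le.chain}'s representatives $p\sim\psi_t(x)$, $q\sim\psi_{s(t)}(y)$ actually coincide with $\psi_t(x),\psi_{s(t)}(y)$ because these already lie in $X_\xi$, where $\sim$ is trivial; if instead a representative were forced into $D\cup I(D)$ one would re-examine, but points of $X_\xi$ have no such alternate representatives.) Expansiveness of $\psi$ now yields $y=\psi_t(x)$ for some $0<t<\delta$, and applying $H$ gives $\tilde y=\widetilde\psi_t(\tilde x)$, as required.

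\emph{Specification.} Suppose $\psi|_{\Omega_\psi\setminus D}$ has the (periodic) specification property, fix $\varepsilon>0$, and choose $\varepsilon'>0$ with $\varepsilon'<\varepsilon$; let $L=L(\varepsilon')>0$ be the constant for $\psi$. Given points $\tilde x_0,\dots,\tilde x_n$ in $\widetilde Y$ and times $t_0<\cdots<t_{n+1}$ with consecutive gaps at least $L$, lift each $\tilde x_i$ to its unique preimage $x_i\in\Omega_\psi\setminus D$. The specification property of $\psi$ provides $y\in\Omega_\psi\setminus D$ (periodic, in the periodic case) and a step function $r$ with the prescribed jump bounds such that $d(\psi_{t+r(t)}(y),\psi_t(x_i))<\varepsilon'$ for all $t\in[t_i,t_{i+1}[$. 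Set $\tilde y=H(y)$, which is periodic whenever $y$ is, keep the same step function $r$, and apply $H$: by~\eqref{eq.conjuga2} and the contraction bound~\eqref{eq.dtil}, $\tilde d(\widetilde\psi_{t+r(t)}(\tilde y),\widetilde\psi_t(\tilde x_i))\le d(\psi_{t+r(t)}(y),\psi_t(x_i))<\varepsilon'<\varepsilon$ for all $t\in[t_i,t_{i+1}[$ and all $0\le i\le n$. Since the jump conditions on $r$ are unchanged, $\widetilde\psi$ has the (periodic) specification property on $\widetilde Y$ with the same constant $L$.

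\emph{Main obstacle.} The delicate point is the expansiveness direction, specifically reconciling the ``ignore an $\varepsilon$-neighborhood of $D$'' clause on the two sides: one must verify that a point avoiding $B_\varepsilon(\pi(D))$ in the quotient corresponds (after passing through Lemma~\ref{le.chain}, which can move to a $\sim$-equivalent representative) to a point avoiding a controlled neighborhood of $D$ in $X$, and conversely that no spurious extra identifications in $\widetilde Y$ cause the conclusion $y=\psi_t(x)$ to fail to descend. Both rest on the fact that on $X_\xi\supseteq\Omega_\psi\setminus D$ the relation $\sim$ is trivial and that $d(D,I(D))>0$; making these estimates uniform, with the right order of quantifiers for $\delta,\varepsilon_0,\varepsilon,\varepsilon'$, is where the care is needed. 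The specification direction is comparatively soft because only the contraction bound~\eqref{eq.dtil} is used and the reparametrization data transfer verbatim.
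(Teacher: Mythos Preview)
Your specification argument is essentially identical to the paper's: lift the points via $H^{-1}$, apply specification for $\psi$, push back down using the contraction bound \eqref{eq.dtil}. Nothing to add there.

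The expansiveness argument, however, has a genuine gap, and it stems from an unnecessary complication. The semiflow $\widetilde\psi$ is \emph{continuous} on the compact set $\widetilde Y$ (this is the whole point of passing to the quotient), so what must be verified is the \emph{classical} expansiveness of Subsection~\ref{se.expansive-specification}: assume $\tilde d(\widetilde\psi_t(\tilde x),\widetilde\psi_{s(t)}(\tilde y))<\varepsilon$ for \emph{all} $t\ge 0$, with no excluded neighborhood. You instead set yourself the harder task of assuming closeness only for $t$ with $\widetilde\psi_t(\tilde x),\widetilde\psi_{s(t)}(\tilde y)\notin B_\varepsilon(\pi(D))$, and this forces you to introduce condition~(ii). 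But~(ii) is false: since $\pi(D)=\pi(I(D))$ in $\widetilde X_\xi$ and $H^{-1}(\pi(D))=I(D)\subset X_\xi$, any point $z\in I(D)\cap(\Omega_\psi\setminus D)$ satisfies $H(z)\in\pi(D)\subset B_\varepsilon(\pi(D))$ while $z\notin B_{\varepsilon_0}(D)$ for small $\varepsilon_0$ (because $d(D,I(D))>0$). So the implication ``$\psi_t(x)\notin B_{\varepsilon_0}(D)\Rightarrow H(\psi_t(x))\notin B_\varepsilon(\pi(D))$'' fails precisely at impulsive times, and your chain of deductions breaks. (Relatedly, the claim that $H^{-1}$ is automatically uniformly continuous because $\widetilde Y$ is compact has the logic reversed: one needs compactness of the \emph{domain} $\Omega_\psi\setminus D$, which is not guaranteed.)

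The paper's route avoids all of this: start from the classical hypothesis $\tilde d(\widetilde\psi_t(\tilde x),\widetilde\psi_{s(t)}(\tilde y))<\varepsilon/2$ for every $t\ge0$; then for any $t$ with $\psi_t(x),\psi_{s(t)}(y)\notin B_\varepsilon(D)$ you already have the quotient bound, apply Lemma~\ref{le.chain} and the triviality of $\sim$ on $X_\xi$ to get $d(\psi_t(x),\psi_{s(t)}(y))<\varepsilon$, and invoke impulsive expansiveness of $\psi$. No comparison between $B_\varepsilon(\pi(D))$ and $B_{\varepsilon_0}(D)$ is ever needed. If you simply drop the ``avoid $B_\varepsilon(\pi(D))$'' clause from your hypothesis on $\tilde x,\tilde y$ and delete condition~(ii), your argument collapses to the paper's and goes through.
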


\begin{proof}
(1)
Suppose that $\psi$ is expansive on  $\Omega_\psi \setminus D$. Given  $\delta>0$, consider $\varepsilon>0$ associated to $\delta$ as in the definition of expansiveness for the  impulsive semiflow~$\psi$.
Let  $\tilde x, \tilde y \in \widetilde Y$ and a continuous map $s: \mathbb{R}_0^+ \rightarrow \mathbb{R}_0^+$ with $s(0)=0$ satisfying
$\tilde d(\widetilde\psi_t(\tilde x),\widetilde \psi_{s(t)}(\tilde y)) < \varepsilon/2$  
for all $t\ge 0$.
Letting $x=H^{-1}(\tilde x)$ and $y=H^{-1}(\tilde y)$, we have for all $t\ge 0$
 \begin{equation*}
 \psi_t(x)=H^{-1}(\widetilde \psi_t(\tilde x))\qand \psi_{s(t)}(y)=H^{-1}(\widetilde \psi_{s(t)}(\tilde y)).
\end{equation*}
Recall   that the equivalence classes of $\psi_s(x)$ and $\psi_s(y)$ are reduced to a single point. Then, if $\psi_t(x),\psi_{s(t)}(y)\notin B_\varepsilon(D)$, it follows from Lemma~\ref{le.chain} that
$d\,(\psi_t( x), \psi_{s(t)}( y)) < \varepsilon.$
Therefore,
$ y =  \psi_t( x)$
for some $0<t < \delta$. This implies that
$\tilde y = \widetilde \psi_t(\tilde x)$, and so $\widetilde\psi$ is expansive.

(2)
Assume now that $\psi$ has the (periodic) specification property on  $\Omega_\psi \setminus D$.
Given  $\varepsilon>0$, consider $L>0$ assigned to $\varepsilon$ by the specification property. For each sequence $\tilde x_0,\dots, \tilde x_n$  in  $\widetilde Y$ and each sequence $0\le t_0<\cdots< t_{n+1}$ such that $t_{i+1}-t_i \geq L$ for all $0\le i\le n$, we have  $x_0,\dots,  x_n$ the unique representatives in  $\Omega_\psi \setminus D$ in the equivalence classes of $x_0,\dots,  x_n$, respectively. Then, as $\psi$
has the (periodic) specification property in  $\Omega_\psi \setminus D$, there  are  a (periodic) point $y \in \Omega_\psi \setminus D$ and  a function $r:\mathbb{R}^+_0 \rightarrow \mathbb{R}^+_0$  constant on each interval $[t_i, t_{i+1}[$  satisfying
$$ r([t_0, t_1[)=0\quad \text{and}\quad
 |r([t_{i+1}, t_{i+2}[) - r([t_i, t_{i+1}[)| < \varepsilon,
$$
for which
$$d\,(\psi_{t+r(t)}(y),\psi_t(x_i)) < \varepsilon, \quad  \forall\, t \in [t_i, t_{i+1}[ \quad\forall \,0\le i \le n.$$
Therefore, the $\widetilde\psi$-orbit of $\tilde y$ is well defined (and periodic) and, using \eqref{eq.dtil}, we get
$$\tilde d\,(\widetilde\psi_{t+r(t)}(\tilde y),\widetilde\psi_t(\tilde x_i))\le d\,(\psi_{t+r(t)}(y),\psi_t(x_i)) < \varepsilon, \quad  \forall\, t \in [t_i, t_{i+1}[ \quad\forall \,0\le i \le n,$$
thus proving that $\widetilde\psi$ has the (periodic) specification property.
\end{proof}

Letting   $i:\Omega_\psi \setminus D\to X$ be the inclusion map and the subscript~$*$  stand for the push-forward of a measure, the next result follows from \cite[Theorem A]{AC14} and \cite[Lemma 4.7]{ACV15}.

\begin{lemma}\label{le.isomorphism}
 The map $i_*: \mathcal M_{ \psi}(\Omega_\psi \setminus D) \to \mathcal M_\psi(X )$ is a bijection.
\end{lemma}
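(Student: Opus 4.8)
The plan is to exhibit $i_*$ as the inverse (up to the identifications already in place) of a push-forward map going the other way, and to do the bookkeeping with the projection $\pi$ and the bijection $H$. First I would recall the structure: by \cite[Theorem A]{AC14}, every $\mu \in \mathcal M_\psi(X)$ satisfies $\mu(D)=0$, and more to the point $\mu(X \setminus (\Omega_\psi \setminus D))=0$, so $\mu$ is carried by the forward-invariant set $\Omega_\psi \setminus D$; this is exactly the content that makes restriction of measures to $\Omega_\psi \setminus D$ well defined. Thus I would define $\rho : \mathcal M_\psi(X) \to \mathcal M_\psi(\Omega_\psi \setminus D)$ by $\rho(\mu)(A) = \mu(A)$ for Borel $A \subseteq \Omega_\psi \setminus D$ (equivalently, $\rho(\mu)$ is the restriction), and check that $\rho(\mu)$ is a Borel probability measure on $\Omega_\psi \setminus D$ that is $\psi_t$-invariant for every $t \ge 0$, using that $\Omega_\psi \setminus D$ is $\psi$-invariant by condition (C2) (cf. \cite[Theorem B]{AC14}).

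Next I would verify the two compositions are the identity. For $\mu \in \mathcal M_\psi(X)$: since $\mu$ gives full measure to $\Omega_\psi \setminus D$, for any Borel $B \subseteq X$ we have $(i_* \rho(\mu))(B) = \rho(\mu)(B \cap (\Omega_\psi \setminus D)) = \mu(B \cap (\Omega_\psi \setminus D)) = \mu(B)$, so $i_* \circ \rho = \mathrm{id}$. For $\nu \in \mathcal M_\psi(\Omega_\psi \setminus D)$: the measure $i_*\nu$ is supported on $\Omega_\psi \setminus D$, so restricting it back returns $\nu$, giving $\rho \circ i_* = \mathrm{id}$. This already yields that $i_*$ is a bijection, provided one knows that $i_*\nu$ does land in $\mathcal M_\psi(X)$ — i.e. that $i_*\nu$ is $\psi_t$-invariant on all of $X$ for every $t$ — which is immediate because $\nu$ is $\psi_t$-invariant on $\Omega_\psi \setminus D$ and $\psi_t$ maps $\Omega_\psi \setminus D$ into itself, so for Borel $B \subseteq X$ one has $(i_*\nu)(\psi_t^{-1}(B)) = \nu(\psi_t^{-1}(B) \cap (\Omega_\psi \setminus D)) = \nu(\psi_t^{-1}(B \cap (\Omega_\psi\setminus D))) = \nu(B \cap (\Omega_\psi\setminus D)) = (i_*\nu)(B)$.

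The one genuinely delicate point — and where I would lean on \cite[Lemma 4.7]{ACV15} — is the surjectivity half dressed up correctly: namely that $i_*$ maps \emph{onto} $\mathcal M_\psi(X)$, which is the assertion that every $\psi$-invariant Borel probability on $X$ is concentrated on $\Omega_\psi \setminus D$. The inclusion $\mathrm{supp}(\mu) \subseteq \Omega_{\psi_1}$ is classical for the time-one map, and $\Omega_{\psi_1} \subseteq \Omega_\psi$; combined with $\mu(D)=0$ (from \cite[Theorem A]{AC14}, see also the discussion after Theorem~\ref{te.pressure}) this gives $\mu(\Omega_\psi \setminus D)=1$. I expect the main obstacle to be purely one of careful measure-theoretic hygiene: checking that restriction and trivial extension are honest mutually inverse maps between the two spaces of invariant measures, and that invariance is preserved in both directions — all of which reduces, as above, to the facts that $\Omega_\psi \setminus D$ is $\psi$-invariant (C2) and has full measure for every $\mu \in \mathcal M_\psi(X)$. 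No topological input beyond compactness and the cited results is needed.
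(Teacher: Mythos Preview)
Your proposal is correct and follows essentially the same route the paper indicates: the paper does not write out a proof but simply cites \cite[Theorem A]{AC14} and \cite[Lemma 4.7]{ACV15}, which together supply exactly the two ingredients you use --- that every $\mu\in\mathcal M_\psi(X)$ gives full measure to $\Omega_\psi\setminus D$, and that this set is $\psi$-invariant --- after which the restriction/extension bookkeeping you carry out is the obvious way to conclude. One small bibliographic correction: the fact $\mu(D)=0$ is \cite[Lemma 4.7]{AC14} rather than Theorem~A there (Theorem~A is the existence result for invariant measures); otherwise your argument is complete and matches the paper's intended one.
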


The following lemma is a straightforward consequence of the previous considerations concerning the map $H$ together with the fact that $H^{-1}$ is measurable by  \cite{P66}.

\begin{lemma}\label{le.isomorphism2}
 The map $H_*: \mathcal M_{ \psi}(\Omega_\psi \setminus D) \to \mathcal M_{\widetilde\psi}(\widetilde Y )$ is a bijection.
\end{lemma}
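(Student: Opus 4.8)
The plan is to observe that $H$ restricts to a Borel isomorphism between $\Omega_\psi\setminus D$ and $\widetilde Y$ that intertwines the two semiflows, and then to transport invariant measures in both directions by push-forward.

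First I would note that, since $\Omega_\psi\setminus D\subseteq X_\xi$ and the only $\sim$-equivalent points of $X_\xi$ are the equal ones, $H=\pi|_{X_\xi}$ restricts to a continuous bijection from $\Omega_\psi\setminus D$ onto $\widetilde Y=\pi(\Omega_\psi\setminus D)$. As $\Omega_\psi\setminus D$ is a Borel subset of the compact metric space $X$ and $\widetilde Y$ is compact, the cited result \cite{P66} guarantees that the inverse $H^{-1}\colon\widetilde Y\to\Omega_\psi\setminus D$ is Borel measurable; thus both $H$ and $H^{-1}$ are Borel maps. Consequently the push-forward operators $H_*$ and $(H^{-1})_*$ are well defined on Borel probability measures, and they are mutually inverse because $(H^{-1}\circ H)_*$ and $(H\circ H^{-1})_*$ are the identity maps on the respective spaces of measures. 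Hence $H_*$ is already a bijection between all Borel probability measures on $\Omega_\psi\setminus D$ and all Borel probability measures on $\widetilde Y$.

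It then remains to check that this bijection carries $\psi$-invariant measures exactly to $\widetilde\psi$-invariant measures. From \eqref{eq.conjuga2} we have $\widetilde\psi_t\circ H=H\circ\psi_t$ for all $t\ge0$, and composing with $H^{-1}$ on both sides also $\psi_t\circ H^{-1}=H^{-1}\circ\widetilde\psi_t$. Therefore, for $\mu\in\mathcal M_\psi(\Omega_\psi\setminus D)$ and any $t\ge0$,
$$(\widetilde\psi_t)_*(H_*\mu)=(\widetilde\psi_t\circ H)_*\mu=(H\circ\psi_t)_*\mu=H_*\big((\psi_t)_*\mu\big)=H_*\mu,$$
so $H_*\mu\in\mathcal M_{\widetilde\psi}(\widetilde Y)$; the symmetric computation with $H^{-1}$ in place of $H$ shows $(H^{-1})_*$ maps $\mathcal M_{\widetilde\psi}(\widetilde Y)$ into $\mathcal M_\psi(\Omega_\psi\setminus D)$. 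Since $H_*$ and $(H^{-1})_*$ are mutually inverse, restricting them to these subsets yields the desired bijection.

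The only point that is not purely formal is the measurability of $H^{-1}$: a continuous bijection need not have a measurable inverse in general, and this is precisely where the Lusin--Souslin-type theorem of \cite{P66} enters. Once that is granted, the rest is the bookkeeping above, so I do not anticipate any further obstacle.
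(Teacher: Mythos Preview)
Your proof is correct and follows exactly the approach the paper has in mind: the paper simply states that the lemma is ``a straightforward consequence of the previous considerations concerning the map $H$ together with the fact that $H^{-1}$ is measurable by \cite{P66},'' and you have faithfully spelled out those considerations---the restriction of $H$ to a continuous bijection onto $\widetilde Y$, the measurability of $H^{-1}$ via \cite{P66}, and the use of the semiconjugacy \eqref{eq.conjuga2} to transport invariance in both directions.
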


\section{Existence and uniqueness of equilibrium states}

Here we finish the proof of Theorem~\ref{te.existence_uniqueness}.
In the next result we show that    $H$ preserves some good properties of potentials.

\begin{lemma}\label{le.Ves}
If $f \in V^*(\psi)$, then $f\circ H^{-1} \in V(\widetilde\psi)$.
\end{lemma}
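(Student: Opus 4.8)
The plan is to unwind the definition of $V(\widetilde\psi)$ and show that each of its two requirements follows from the corresponding property of $f \in V^*(\psi)$, transported through the semiconjugacy $H$. Recall that $H:X_\xi \to \widetilde X_\xi$ is a continuous bijection (in fact a homeomorphism onto the compact set $\widetilde X_\xi$) satisfying $\widetilde\psi_t \circ H = H \circ \psi_t$, so $H^{-1}$ is well defined and continuous; and $\widetilde\psi$ is a \emph{continuous} semiflow, so the space $V(\widetilde\psi)$ from Section~\ref{se.equ-state} makes sense. We must produce constants $\widetilde K>0$ and $\widetilde\varepsilon>0$ witnessing membership of $g := f\circ H^{-1}$ in $V(\widetilde\psi)$.

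First I would record that $g = f \circ H^{-1}$ is continuous on $\widetilde X_\xi$, since $H^{-1}$ is continuous and $f$ is continuous on $X \supseteq X_\xi$. Next, the change of variables $\psi_s(x) = H^{-1}(\widetilde\psi_s(\tilde x))$ gives, for every $\tilde x \in \widetilde X_\xi$ and $t>0$,
\begin{equation*}
\int_0^t g(\widetilde\psi_s(\tilde x))\,ds = \int_0^t f\big(H^{-1}(\widetilde\psi_s(\tilde x))\big)\,ds = \int_0^t f(\psi_s(x))\,ds,
\end{equation*}
where $x = H^{-1}(\tilde x)$. So the Birkhoff integrals for $g$ along $\widetilde\psi$ coincide exactly with those for $f$ along $\psi$. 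It then remains to translate the closeness hypothesis: given $\tilde x, \tilde y$ with $\tilde d(\widetilde\psi_s(\tilde x), \widetilde\psi_s(\tilde y)) < \widetilde\varepsilon$ for all $s\in[0,t]$, I need that $x=H^{-1}(\tilde x)$ and $y=H^{-1}(\tilde y)$ satisfy, for those $s$ with $\psi_s(x),\psi_s(y)\notin B_\varepsilon(D)$, the inequality $d(\psi_s(x),\psi_s(y))<\varepsilon$, where $\varepsilon$ is the constant supplied by $f\in V^*(\psi)$. This is exactly where Lemma~\ref{le.chain} enters: since every point of $X_\xi$ is the unique representative of its $\sim$-class, the chain estimate gives $d(\psi_s(x),\psi_s(y)) \le 2\,\tilde d(\widetilde\psi_s(\tilde x),\widetilde\psi_s(\tilde y))$, so choosing $\widetilde\varepsilon = \varepsilon/2$ suffices (and in fact the restriction to points outside $B_\varepsilon(D)$ is then automatically irrelevant, or only helps). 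Property~(2) of $V^*(\psi)$ then yields $\big|\int_0^t f(\psi_s(x))\,ds - \int_0^t f(\psi_s(y))\,ds\big| < K$, and by the displayed identity the same bound holds with $g$ and $\widetilde\psi$, so $\widetilde K = K$ works. Here I should also observe that property~(1) of $V^*(\psi)$, namely $f(z) = f(I(z))$ for $z\in D$, is what guarantees $f$ is constant on $\sim$-classes and hence descends to a well-defined function on the quotient — this is needed so that $f\circ H^{-1}$ is even unambiguously defined on $\widetilde X_\xi = \pi(X_\xi)$, although on $X_\xi$ itself the classes are singletons; more to the point it is condition~(1) that is implicitly used to make Lemma~\ref{le.chain}'s passage between $X$ and $\widetilde X$ compatible with $f$.

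The only real subtlety, and the step I expect to need the most care, is the handling of the set $B_\varepsilon(D)$ in the definition of $V^*(\psi)$ versus its absence in the definition of $V(\widetilde\psi)$: the hypothesis in $V(\widetilde\psi)$ is the \emph{stronger} statement that closeness holds for \emph{all} $s\in[0,t]$, so when we transport it down to $\psi$ we get closeness of $\psi_s(x),\psi_s(y)$ for all $s\in[0,t]$, in particular for those $s$ with $\psi_s(x),\psi_s(y)\notin B_\varepsilon(D)$, which is precisely (indeed more than) what property~(2) of $V^*(\psi)$ requires. So the direction of implication is favourable and the $B_\varepsilon(D)$ exception causes no trouble. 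I would write the argument making this asymmetry explicit so the reader sees why no compactness or uniform-continuity gymnastics around $D$ are needed. Finally, assembling: $g=f\circ H^{-1}$ is continuous and satisfies \eqref{temK} with constants $\widetilde K = K$ and $\widetilde\varepsilon = \varepsilon/2$, hence $g \in V(\widetilde\psi)$, completing the proof.
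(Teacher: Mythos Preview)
Your argument for the integral estimate \eqref{temK} is essentially the paper's: pull back via $H^{-1}$, invoke Lemma~\ref{le.chain} (using that equivalence classes in $X_\xi$ are singletons) with $\widetilde\varepsilon=\varepsilon/2$, and observe that the $B_\varepsilon(D)$ exception in $V^*(\psi)$ only weakens the hypothesis you need. That part is fine.

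There is, however, a genuine gap in your continuity argument. You assert that $H:X_\xi\to\widetilde X_\xi$ is a homeomorphism and hence $H^{-1}$ is continuous, but this is false: $X_\xi=X\setminus(D_\xi\cup D)$ is generally \emph{not} compact (the set $D$ is closed, not open, so removing it leaves a non-closed set), while $\widetilde X_\xi=\pi(X\setminus D_\xi)$ \emph{is} compact by~\eqref{eq.xxi}. A continuous bijection from a non-compact space onto a compact one need not have continuous inverse, and indeed the paper only claims $H^{-1}$ is measurable (see the remark before Lemma~\ref{le.isomorphism2}). Concretely, if $x_n\in X_\xi$ converges to some $x\in D$, then $H(x_n)\to\pi(x)=\pi(I(x))=H(I(x))$ in $\widetilde X_\xi$, yet $H^{-1}(H(x_n))=x_n$ does not converge in $X_\xi$.

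The correct route to continuity is exactly the one you mention only in passing: condition~(1) of $V^*(\psi)$, namely $f=f\circ I$ on $D$, says precisely that $f$ is constant on $\sim$-classes, so $f$ factors through the quotient map $\pi$ as $f=\tilde f\circ\pi$ for some $\tilde f:\widetilde X\to\mathbb R$. Since $\pi$ is a quotient map and $f$ is continuous, $\tilde f$ is continuous, and $f\circ H^{-1}$ is just the restriction of $\tilde f$ to $\widetilde X_\xi$. This is the paper's argument (``Recalling the definition of the equivalence relation $\sim$, we can easily see that $f\circ H^{-1}$ is continuous''), and it is where condition~(1) is genuinely used --- not merely to make $f\circ H^{-1}$ well defined (that is automatic since $H$ is a bijection), and not for compatibility with Lemma~\ref{le.chain} (which does not involve $f$ at all).
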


\begin{proof}
By definition of $V^*(\psi)$,
we have $f$  continuous and $f(x)=f(I(x))$ for all $x\in D$. Recalling the definition of the equivalence relation $\sim$, we can easily see that $f\circ H^{-1} $ is continuous.
We are left to check that condition 
\eqref{temK} holds.
%

Given  $f\in V^*(\psi)$, there are constants $K>0$ and $\varepsilon > 0$ for which
\eqref{temK*}
holds
whenever
$d\,(\psi_s(x), \psi_s(y)) < \varepsilon$ for all $ s  \in [0,t]$ such that $\psi_s(x),\psi_s(y)\notin B_\varepsilon(D)$.  Consider $\tilde x,\tilde y\in \widetilde Y$ and $t>0$  such that $d\,(\widetilde\psi_s(\tilde x), \widetilde\psi_s(\tilde y)) < \varepsilon/2$ for all $ s  \in [0,t]$. Letting $x=H^{-1}(\tilde x)$ and $y=H^{-1}(\tilde y)$, we have for all $s\in [0,t]$
 \begin{equation}\label{eq.psitil}
 \psi_s(x)=H^{-1}(\widetilde \psi_s(\tilde x))\qand \psi_s(y)=H^{-1}(\widetilde \psi_s(\tilde y)).
\end{equation}
Observe  that the equivalence classes of $\psi_s(x)$ and $\psi_s(y)$ are reduced to a single point. Then, if $\psi_s(x),\psi_s(y)\notin B_\varepsilon(D)$, it follows from Lemma~\ref{le.chain} that
$d\,(\psi_s( x), \psi_s( y)) < \varepsilon.$
Therefore,
\begin{equation}\label{eq.capa}
\left|\int_0^t f(\psi_s(x))\,ds - \int_0^t f(\psi_s(y))\,ds\right|< K.
\end{equation}
Recalling \eqref{eq.psitil}, we have
$$
\left|\int_0^t f(H^{-1}(\widetilde \psi_s(\tilde x)))\,ds - \int_0^t f(H^{-1}(\widetilde \psi_s(\tilde y)))\,ds\right|=\left|\int_0^t f(\psi_s(x))\,ds - \int_0^t f(\psi_s(y))\,ds\right|,
$$
which together with \eqref{eq.capa} gives the desired conclusion.
\end{proof}

Given  a potential $f\in V^*(\psi)$ and considering $\tilde f=f\circ H^{-1}$, it follows from Lemma~\ref{le.Ves} that $\tilde f\in V(\widetilde\psi)$. Therefore, we may use \cite{F77} and obtain an equilibrium state $\tilde\mu_{\tilde f}$ for  $\tilde f$ (with respect to $\widetilde\psi$).
Taking $\mu_f=   i_* H^{-1}_*\tilde\mu_{\tilde f}$, we are going to verify that $\mu_f$ is an equilibrium state for~$f$ (with respect to $\psi$). Firstly notice that it follows from Lemmas~\ref{le.isomorphism} and~\ref{le.isomorphism2}  that for every  $\nu \in  M_\psi(X)$ we have
\begin{equation}\label{eq.cincum}
\int \tilde{f}\, dH_* i_*^{-1}\nu = \int f\circ H^{-1} \,dH_* i_*^{-1}\nu=\int f\,d\, i_*^{-1}\nu=\int f\,d\,\nu,
\end{equation}
and also that, using  \eqref{eq.conjuga2} we get
\begin{equation}\label{eq.cincois}
h_{\tilde\nu}(\widetilde\psi_1)=h_{\nu}\left(\psi_1\vert_{\Omega_\psi \setminus D}\right)=h_{ \nu}\left( \psi_1\right),
\end{equation}
where $\tilde \nu=H_* i_*^{-1}\nu$. From \eqref{eq.cincum} and \eqref{eq.cincois} we deduce that
\begin{align}\label{eq.varprin}
\sup\left\{h_{\nu}(\psi_1) + \int f\,d\nu \colon \, \nu \in M_\psi(X)\right\}&= \sup\left\{h_{\tilde\nu}(\widetilde\psi_1) + \int \tilde f\,d\tilde\nu \colon \nu \in M_\psi(X)\right\}\nonumber \\
 &=\sup\left\{h_{\eta}(\widetilde\psi_1) + \int \tilde f\,d\eta \colon \eta \in M_{\widetilde\psi}(\widetilde Y)\right\},
\end{align}
where the last equality is due to Lemmas~\ref{le.isomorphism} and~\ref{le.isomorphism2}.
As $\tilde\mu_{\tilde f}$ is an equilibrium state for $\tilde f$ and \eqref{eq.cincois} holds for $\nu=\mu_f$, we conclude that
 $$h_{ \mu_{ f}}\left( \psi_1\right)= \sup\left\{h_{\nu}(\psi_1) + \int f\,d\nu \colon \, \nu \in M_\psi(X)\right\}.$$
Additionally, assuming that $\dim(X)<\infty$  and  $\#I^{-1}(\{y\})\le k$ for every $y \in I(D)$, it follows from   \cite[Theorem~3.3.7]{E95} that $\dim(\widetilde Y)<\infty$, and so $\tilde\mu_{\tilde f}$ is unique by \cite[Theorem~2.15]{F77}. Since we have $\mu_f=   i_* H^{-1}_*\tilde\mu_{\tilde f}$, the equilibrium state $\mu_f$ is unique as well.

\section{Variational principle}

According to \cite{F77}, given a continuous semiflow $\varphi$ on a metric space $X$ and a continuous potential $f:X \rightarrow \mathbb{R}$, the topological pressure of $f$ 
may be described thermodynamically as
\begin{equation}\label{eq.pressure}
P(\varphi,f) = \sup_{\mu \in \mathcal{M}_{\varphi}(X)} \,\left\{h_\mu(\varphi_1) + \int f\,d\mu\right\}.
\end{equation}
The aim of this section is to prove a generalization of this equality to impulsive semiflows, replacing the classical notion of topological pressure by the new one (cf. Section~\ref{sse:new-definition}).

Consider a compact metric space $(X,d)$, an impulsive dynamical system  $(X,\varphi,D,I)$ satisfying conditions (C1)-(C5) and $\tau$ the admissible function of the corresponding impulsive times. The assumptions on $I(D)$ ensure that the function that assigns to each $x\in X$ the sequence of visit times to $I(D)$, say $\theta(x):=(\theta_n(x))_{n \in \mathbb{N}}$, is an admissible function with respect to $I(D)$. Moreover, as $I(D)\cap D=\emptyset$, we may re-index the sequences $\tau(x)$ and $\theta(x)$ in order to assemble them in a unique admissible function $\tau'$, 
 where $\tau'_n(x)$ is either $\tau_m(x)$ or $\theta_m(x)$, for some $m$. This way, we have $\tau'\succ \tau$.

\begin{lemma}\label{le.equal1} For every continuous potential $f:X \rightarrow \mathbb{R}$, we have $P^\tau(\psi,f)= P^{\tau'}(\psi,f).$
\end{lemma}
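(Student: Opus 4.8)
The statement to prove is $P^\tau(\psi,f)=P^{\tau'}(\psi,f)$, where $\tau'\succ\tau$. One inequality is immediate: since $\tau'$ refines $\tau$, Lemma~\ref{le.refine_invariance} gives $P^\tau(\psi,f)\geq P^{\tau'}(\psi,f)$. So the whole content of the lemma is the reverse inequality $P^\tau(\psi,f)\leq P^{\tau'}(\psi,f)$, and the plan is to prove it by comparing the holes removed by the two admissible functions.

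The key observation is that $\tau'$ differs from $\tau$ only by inserting, between consecutive impulsive times $\tau_{n-1}(x)$ and $\tau_n(x)$, a single extra marked time $\theta_m(x)$ — the visit time to $I(D)$ — and crucially this extra time is comparable to $\tau_{n-1}(x)$: indeed $\gamma_x(\tau_{n-1}(x))\in I(D)$ by construction of the impulsive trajectory, so the orbit lands in $I(D)$ exactly at the impulsive time, and by continuity (or by the $\xi$-half-tube condition (C4) on $I(D)$) the next visit time to $I(D)$ from a point near $I(D)$ is uniformly bounded. More precisely I would argue: because each $\tau_{n-1}(x)$ is itself a visit time of $\gamma_x$ to $I(D)$ (this is where condition (C1), $I(D)\cap D=\emptyset$, and the trajectory definition $\gamma_x(\tau_n(x))=I(\dots)\in I(D)$ are used), the set of $\tau'$-marked times is contained in a bounded neighborhood of the set of $\tau$-marked times; hence for any $\delta>0$ the holes $\bigcup_{j\in A'(x)}\,]\tau'_j(x)-\delta,\tau'_j(x)+\delta[$ removed in defining $J^{\tau'}_{t,\delta}(x)$ are contained in $\bigcup_{j\in A(x)}\,]\tau_j(x)-\rho,\tau_j(x)+\rho[$ for some $\rho$ depending only on $\delta$ and the uniform bound, with $\rho\to 0$ as $\delta\to 0$. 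Consequently $J^\tau_{t,\rho}(x)\subseteq J^{\tau'}_{t,\delta}(x)$.

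Granting that inclusion, a $(\psi,\tau',\delta,\varepsilon,t)$-separated set is also $(\psi,\tau,\rho,\varepsilon,t)$-separated (we have fewer separation instants available for $\tau'$, so separation with the smaller set $J^{\tau'}$ implies separation with the larger set $J^\tau$). Wait — I must be careful about the direction of the inclusion and which function is being refined: since $\tau'\succ\tau$, the marked times of $\tau'$ include those of $\tau$, so $J^{\tau'}_{t,\delta}(x)\subseteq J^\tau_{t,\delta}(x)$ trivially, which already gives $Z^{\tau'}\leq Z^\tau$ and hence $P^{\tau'}\le P^\tau$ (this is exactly Lemma~\ref{le.refine_invariance}). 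For the reverse, I instead run the comparison at mismatched $\delta$'s: I claim $J^\tau_{t,\rho}(x)\subseteq J^{\tau'}_{t,\delta}(x)$ when $\rho$ is chosen small relative to $\delta$ and the minimal gap $\eta$, using that the $\tau'$-marked times not coming from $\tau$ lie within a controlled distance of a $\tau$-marked time only if that distance exceeds $\rho$... This is the delicate point. The cleaner route, which I would actually pursue, is: the extra $\tau'$-marked time $\theta_m(x)$ strictly between $\tau_{n-1}(x)$ and $\tau_n(x)$ satisfies $\theta_m(x)-\tau_{n-1}(x)\geq\eta$ and $\tau_n(x)-\theta_m(x)\geq\eta$ by admissibility; moreover $\theta_m(x)$ is bounded away from neither endpoint only in the sense of the admissible gap, but I need a two-sided bound $\theta_m(x)-\tau_{n-1}(x)\le M$ for uniform $M$. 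This uniform upper bound $M$ follows from compactness of $X$, continuity of $\psi$ away from $D$ on the relevant region, and the fact that trajectories reach $D$ (hence $I(D)$) in uniformly bounded time on $\Omega_\psi\setminus D$ — or simply from the structure established in Section~\ref{sec.quotient}. Then $J^\tau_{t,\delta}(x)$ removes holes of radius $\delta$ around the $\tau$-times, and $J^{\tau'}_{t,\delta}(x)$ additionally removes holes around the $\theta$-times which sit at bounded distance; a $(\psi,\tau,\delta,\varepsilon,t)$-separated set achieves its separations at instants $s\in J^\tau_{t,\delta}(x)$, and if such an $s$ happens to lie in a $\theta$-hole we slide it by at most $2\delta$ using Lemma~\ref{le.alfabeta}-type continuity (with $\varepsilon$ replaced by $\varepsilon/2$, shrinking $\delta$ as needed) to land in $J^{\tau'}_{t,\delta}(x)$, exactly as in the proof of Theorem~\ref{te.coincide}. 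This yields $Z^\tau(\psi,f,\delta,\varepsilon,t)\le Z^{\tau'}(\psi,f,\delta',\varepsilon',t)$ for suitable $\delta',\varepsilon'\to 0$, and passing to the limits gives $P^\tau(\psi,f)\le P^{\tau'}(\psi,f)$.

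\textbf{Main obstacle.} The technical heart is the uniform upper bound on the gap between a $\theta$-marked time and the preceding $\tau$-marked time — equivalently, a uniform upper bound on the time it takes a trajectory starting at a point of $I(D)$ to return to $I(D)$ — together with making the sliding argument (replacing a separation instant inside a $\theta$-hole by a nearby instant outside all holes of $\tau'$) rigorous without losing a definite fraction of the separation scale $\varepsilon$. I expect this to be handled exactly by mimicking the argument already carried out for Theorem~\ref{te.coincide}: choose $\alpha$ from Lemma~\ref{le.alfabeta} for $\varepsilon/4$, then $\delta<\min\{\eta,\alpha/2,\varepsilon\}$, and observe that any separation instant $s$ for $\tau$ either already lies in $J^{\tau'}_{t,\delta}(x)$ or lies within $2\delta<\alpha$ of such an instant, in which case uniform continuity preserves $\varepsilon/2$-separation there. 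The rest is bookkeeping of the limits $\varepsilon'\to 0$, $\delta\to 0$.
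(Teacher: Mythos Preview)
Your easy direction via Lemma~\ref{le.refine_invariance} is correct and matches the paper. For the hard direction the paper simply invokes the proof of \cite[Lemma~3.1]{ACV15}, which shows that for small $\varepsilon,\delta$ every finite $(\psi,\tau,\delta,\varepsilon,t)$-separated set is also $(\psi,\tau',\delta,\varepsilon/2,t)$-separated, whence $Z^\tau(\psi,f,\delta,\varepsilon,t)\le Z^{\tau'}(\psi,f,\delta,\varepsilon/2,t)$ and the pressure inequality follows. Your final ``sliding'' argument is exactly this idea, so you end up at the same proof as the paper.

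Two comments on the route you take to get there. First, the whole discussion of a uniform \emph{upper} bound on $\theta_m(x)-\tau_{n-1}(x)$ is a red herring. Nothing of the sort is needed: all that matters is the uniform \emph{lower} bound $\eta$ on gaps in the admissible function $\tau'$, which guarantees that the endpoint $\theta_m(x)\pm\delta$ of any $\theta$-hole already lies in $J^{\tau'}_{t,\delta}(x)$ whenever $\delta<\eta/2$. Hence if a separation instant $s\in J^\tau_{t,\delta}(x)$ falls in a $\theta$-hole, there is always $s'\in J^{\tau'}_{t,\delta}(x)$ with $|s-s'|\le 2\delta$, with no appeal to any upper bound on return times to $I(D)$.

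Second, and more substantive: you propose to invoke Lemma~\ref{le.alfabeta} to control $d(\psi_s(\cdot),\psi_{s'}(\cdot))$ when sliding from $s$ to $s'$, but that lemma is stated for the \emph{continuous} semiflow $\varphi$, whereas separation here is measured along $\psi$, which jumps. Simply ``mimicking the argument of Theorem~\ref{te.coincide}'' does not work verbatim, because there the semiflow was continuous by hypothesis. The argument in \cite{ACV15} handles this as follows: since $s$ lies within $\delta$ of a visit of $\psi_\cdot(x)$ to $I(D)$ and at distance $\ge\delta$ from every impulsive time of $x$, the $x$-trajectory has no impulse on $[s-2\delta,s+2\delta]$ and coincides there with a $\varphi$-arc, so Lemma~\ref{le.alfabeta} applies to the $x$-component. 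For the $y$-component one must still deal with the possibility that an impulsive time $\tau_k(y)$ falls inside the sliding interval; your proposal does not address this, and it is precisely this point---not any upper bound on gaps---that carries the technical weight and is what the paper defers to \cite[Lemma~3.1]{ACV15}.
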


\begin{proof}
Let $f$ be a continuous potential in $X$. As $\tau'\succ \tau$, by Lemma~\ref{le.refine_invariance} we have $$P^{\tau'}(\psi, f)\leq P^\tau(\psi, f).$$ Concerning the other inequality, it follows from the proof of \cite[Lemma 3.1]{ACV15} that, given $t>0$ and small enough $\varepsilon> 0$ and $\delta>0$, then a set $E$ which is finite and $(\psi,\tau',\delta, \varepsilon, t)$-separated  is $(\psi,\tau,\delta, \varepsilon/2, t)$-separated as well. Therefore
$$Z^{\tau}(\psi, \delta, \varepsilon, t) \leq Z^{\tau'}(\psi, \delta,\varepsilon/2,t)$$
and so $P^\tau(\psi,f)\leq P^{\tau'}(\psi,f)$.
\end{proof}

As the distance between the compact sets $D$ and $I(D)$ is strictly positive, fixing $\eta>0$ (recall Remark~\ref{taueta}) and $\xi_0>0$ associated to the conditions (C3)-(C5), we may choose
\begin{equation}\label{eq.xi}
0<\xi<\min\,\{\eta/4, \xi_0/2\}
\end{equation}
small enough so that $I(D)\cap D_\xi=\emptyset$. The next result shows that, with this suitable choice of $\xi$, the $\tau$ and $\tau'$-topological pressures of the semiflows $\psi$ and $\psi_{|_{X_\xi}}$ coincide for potentials in $V^*(\psi)$.

\begin{lemma}\label{le.equal2} If $f \in V^*(\psi)$, then  there is $0<\xi<\xi_0$ such that $P^{\tau'}(\psi,f)=P^{\tau'}(\psi|_{{X_\xi}}, f)$.
\end{lemma}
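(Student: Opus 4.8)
The parameter $\xi$ is already pinned down by \eqref{eq.xi}, with $I(D)\cap D_\xi=\emptyset$; for this choice $X_\xi=X\setminus(D_\xi\cup D)$ is forward invariant under $\psi$. The plan is to prove the two inequalities separately. The inequality $P^{\tau'}(\psi|_{X_\xi},f)\leq P^{\tau'}(\psi,f)$ is immediate: since $X_\xi$ is forward invariant, the $\psi$-orbit of a point of $X_\xi$ stays in $X_\xi$ and coincides with its $\psi|_{X_\xi}$-orbit, so the impulsive times and the $I(D)$-visit times (hence the sets $J^{\tau'}_{t,\delta}(x)$) are the same in both settings; consequently any finite $(\psi|_{X_\xi},\tau',\delta,\varepsilon,t)$-separated $E\subseteq X_\xi$ is, verbatim, a $(\psi,\tau',\delta,\varepsilon,t)$-separated subset of $X$, whence $Z^{\tau'}(\psi|_{X_\xi},f,\delta,\varepsilon,t)\leq Z^{\tau'}(\psi,f,\delta,\varepsilon,t)$ and, letting $t\to\infty$, then $\varepsilon\to0$, then $\delta\to0$, we get $P^{\tau'}(\psi|_{X_\xi},f)\leq P^{\tau'}(\psi,f)$.

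For the reverse inequality the first step is an auxiliary claim: with the above $\xi$ one has $\psi_\xi(X)\subseteq X_\xi$. To see this I would follow the orbit of an arbitrary $x$ over $[0,\xi]$. If $x\in D$ then $\tau_1(x)\geq\xi$ by the $\xi$-half-tube condition on $D$, so $\psi_s(x)=\varphi_s(x)$ for $0\le s\le\xi$, and combining the $\xi$-regularity of $\varphi$ on $D$ (condition (C3)) with the disjointness clause (2) of the half-tube condition one checks that $\varphi_\xi(x)\notin D_\xi\cup D$; if $x\in D_\xi$, say $x=\varphi_u(w)$ with $w\in D$, $0<u<\xi$, the same two ingredients exclude $\psi_\xi(x)\in D_\xi\cup D$, using in addition that as soon as the orbit meets an impulse it lands in $I(D)$ — which is disjoint from $D$ and from $D_\xi$ — and that $\tau_1\geq\eta>\xi$ on $I(D)$ by Remark~\ref{taueta}. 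Granting the claim, fix a finite $(\psi,\tau',\delta,\varepsilon,t)$-separated $E\subseteq X$ and push it forward by $\xi$, i.e.\ pass to $\psi_\xi(E)\subseteq X_\xi$. Because $\psi$ is a semiflow, $\psi_s(\psi_\xi(x))=\psi_{s+\xi}(x)$, so
\[
\left|\int_0^t f(\psi_s(\psi_\xi(x)))\,ds-\int_0^t f(\psi_s(x))\,ds\right|=\left|\int_t^{t+\xi} f(\psi_s(x))\,ds-\int_0^\xi f(\psi_s(x))\,ds\right|\leq 2\xi\,\|f\|_\infty,
\]
a bound independent of $t$.

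It remains to transfer separation. If $x\neq y$ in $E$ are $\varepsilon$-separated at a time $s\geq\xi$, then $\psi_{s-\xi}(\psi_\xi(x))=\psi_s(x)$ and $\psi_{s-\xi}(\psi_\xi(y))=\psi_s(y)$ are $\varepsilon$-apart; moreover the $\tau'$-times of $\psi_\xi(x)$ are precisely the $\tau'$-times of $x$ that exceed $\xi$, shifted down by $\xi$, so $s-\xi\in J^{\tau'}_{t,\delta}(\psi_\xi(x))$ and, in particular, $\psi_\xi(x)\neq\psi_\xi(y)$. The delicate case is when the only $\varepsilon$-separation times of $x$ and $y$ lie in $[0,\xi)$: here I would use clause (3) of the $\xi$-half-tube condition on $D$ and on $I(D)$, which says that on the $\xi$-collars of these sets the flow contracts distances by at most a fixed factor $C$, so an $\varepsilon$-separation at time $s<\xi$ yields an $(\varepsilon/C)$-separation of $\psi_\xi(x)$ and $\psi_\xi(y)$ at a time $\le\xi-s$ which, after a further adjustment coming from uniform continuity of $\varphi$ (Lemma~\ref{le.alfabeta}), can be relocated inside $J^{\tau'}_{t,\delta'}(\psi_\xi(x))$; this is organized by casework according to where $x,y$ — and, if an impulse precedes time $\xi$, their post-impulse images in $I(D)$ — lie, with clause (2) preventing the two orbits from coalescing before time $\xi$ and the relation $f\circ I=f$ on $D$ (part of $f\in V^*(\psi)$) keeping the two Birkhoff integrals comparable across such impulses; the multiplicity of $\psi_\xi|_E$ is bounded by a constant $M=M(\varepsilon,\xi)$ by the same collar analysis together with the fact that $(\varphi,\varepsilon,\xi)$-separated subsets of the compact $X$ have uniformly bounded cardinality. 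Putting these pieces together gives $Z^{\tau'}(\psi,f,\delta,\varepsilon,t)\leq M\,e^{2\xi\|f\|_\infty}\,Z^{\tau'}(\psi|_{X_\xi},f,\delta',\varepsilon',t)$ with $M$ independent of $t$, for suitable $\varepsilon',\delta'$ with $\varepsilon'\to0$ as $\varepsilon\to0$ and $\delta'\to0$ as $\delta\to0$; taking $\tfrac1t\log$ and $\limsup_{t\to\infty}$ discards the constants, and then letting $\varepsilon'\to0$ and $\delta'\to0$ yields $P^{\tau'}(\psi,f)\leq P^{\tau'}(\psi|_{X_\xi},f)$. I expect this last ``separation before time $\xi$'' step to be the main obstacle: one must carry a separation that takes place entirely within the first $\xi$ units of the orbit — where the orbit may still lie in $D_\xi\cup D$, may cross an impulse, and where $\psi$ is discontinuous — out to the forward-invariant set $X_\xi$ with no loss, and it is precisely there that (C3)–(C5), the two half-tube conditions, the disjointness $I(D)\cap D_\xi=\emptyset$, and the structure of $V^*(\psi)$ must all be used simultaneously.
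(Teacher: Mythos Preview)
Your easy inequality matches the paper. For the hard one you and the paper diverge in two places.

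\emph{Integral comparison.} Your telescoping bound $\bigl|\int_0^t f(\psi_s(\psi_\xi(x)))\,ds-\int_0^t f(\psi_s(x))\,ds\bigr|\le 2\xi\|f\|_\infty$ is correct and more elementary than what the paper does. The paper instead invokes the $V^*(\psi)$ constant $K$: it uses Lemma~\ref{le.alfabeta} to obtain $\alpha>0$, shrinks $\xi$ further so that $\xi<\alpha$, argues that the orbits of $x$ and $\psi_\xi(x)$ then stay $\rho$-close outside $B_\rho(D)$, and applies \eqref{temK*}. Your route needs neither the second clause of $V^*(\psi)$ nor this extra restriction on $\xi$; incidentally the condition $f\circ I=f$ plays no role in the integral estimate either, since the impulse times form a null set.

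\emph{Separation transfer.} Here the paper sidesteps your acknowledged obstacle by not pushing all of $E$ forward. It splits $E=A\cup B$ with $A=E\cap(D\cup D_\xi)$ and $B=E\cap X_\xi$, leaves $B$ untouched (so its separation times are preserved verbatim), and applies $\psi_\xi$ only to $A$. The $(\psi|_{X_\xi},\tau',\delta,\varepsilon',t)$-separation of $\psi_\xi(A)$ (with $\#\psi_\xi(A)=\#A$) and of $\psi_\xi(A)\cup B$ is then read off from the proof of \cite[Lemma~3.2]{ACV15}, which exploits precisely the collar structure of $D\cup D_\xi$. Your plan, by contrast, time-shifts every pair and therefore must handle the ``only separation in $[0,\xi)$'' case even for pairs $x,y\in B\subset X_\xi$. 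But clause~(3) of the half-tube condition, on which your sketch relies, is stated only for initial points lying on $D$ or on $I(D)$; for generic $x,y\in X_\xi$ with no impulse or $I(D)$-visit before time $\xi$ there is no such bound, and nothing prevents $d(\psi_s(\psi_\xi(x)),\psi_s(\psi_\xi(y)))\le\varepsilon'$ for all $s\in[0,t]$ even though $d(\psi_{s_0}(x),\psi_{s_0}(y))>\varepsilon$ at some $s_0<\xi$. The $A/B$ split is exactly the missing device that eliminates this case; once you adopt it, your simpler integral bound can replace the paper's $K$-argument verbatim.
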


\begin{proof}
Consider a potential $f \in V^*(\psi)$. As $X_\xi \subset X$, then $P^{\tau'}(\psi|_{{X_\xi}},f)\leq P^{\tau'}(\psi,f).$ We are left to prove the other inequality. It follows from the assumption that $D$ satisfies condition (C3) and the proof of \cite[Lemma 3.2]{ACV15} that, given $t>0$ and small enough $\varepsilon> 0$ and $\delta>0$, if a set $E\subset X$ is finite and $(\psi,\tau',\delta, \varepsilon, t)$-separated and we take the subsets
\begin{equation}\label{eq.EAB}
A= E \cap (D \cup D_\xi)\qand B=E \cap X_\xi,,
\end{equation}
 then $B$ is $(\psi_{\vert X_\xi},\tau',\delta, \varepsilon, t)$-separated and that there exists $\varepsilon' < \varepsilon$ such that $\psi_\xi(A)$ is $(\psi_{\vert X_\xi},\tau',\delta, \varepsilon', t)$-separated. Moreover, $\# \psi_\xi(A)=\# A$. Hence, $\psi_\xi(A) \cup B$ is finite and $(\psi_{\vert X_\xi},\tau',\delta, \varepsilon', t)$-separated.

Since $f \in V^*(\psi)$, there are $\rho>0$ and $K>0$ such that for every $t>0$ we have
$$\left|\int_0^t f(\psi_s(x))\,ds - \int_0^t f(\psi_s(y))\,ds\right|< K,$$
whenever $d\,(\psi_{s}(x), \psi_s(y)) < \rho$ for all $ s \in [0,t]$ such that $\psi_s(x),\psi_s(y)\notin B_{\rho}(D)$. Now, using
Lemma~\ref{le.alfabeta}, we may find $\alpha>0$ such that $\dist(\varphi_s(x),\varphi_u(x))<\rho$ for all $x\in X$ and  $s,u\geq 0$ with $|s-u|<\alpha$. Therefore, choosing  $0<\xi<\alpha$, we have
$$d(\psi_{s}(x), \psi_s(\psi_\xi (x))) =d(\varphi_{s}(x), \varphi_s(\varphi_\xi (x))) < \rho$$
for all $s \in [0,t]$ such that $\psi_s(x),\psi_s(\psi_\xi(x))\notin B_{\rho}(D)$. Hence,
\begin{align*}
\sum_{x \in A} \exp&\left(\int_0^t f(\psi_s(x))ds\right)= \\
&= \sum_{x \in A}\exp\left(\int_0^t f(\psi_s(x))ds - \int_0^t f(\psi_s(\psi_\xi(x)))ds + \int_0^t f(\psi_s(\psi_\xi(x)))ds \right)\\
&\leq  \sum_{x \in A} \exp\left(\left|\int_0^t f(\psi_s(x))ds - \int_0^t f(\psi_s(\psi_\xi(x)))ds \right|\right) \exp\left(\int_0^t f(\psi_s(\psi_\xi(x)))ds\right)\\
&\leq  e^K \sum_{x \in A}\exp\left(\int_0^t f(\psi_s(\psi_\xi(x)))ds\right) = e^K \sum_{y \in \psi_\xi(A)}\exp\left(\int_0^t f(\psi_s(y))ds\right).
\end{align*}
On the other hand, recalling \eqref{eq.EAB} we may write
$$\sum_{x \in E} \exp\left({\int_0^t f(\psi_s(x))ds}\right) = \sum_{x \in A} \exp\left({\int_0^t f(\psi_s(x))ds}\right) + \sum_{x \in B} \exp\left({\int_0^t f(\psi_s(x))ds}\right).$$
Therefore, as $e^K > 1$ and $\varepsilon' < \varepsilon$
we get
$$\sum_{x \in E} \exp\left({\int_0^t f(\psi_s(x))ds}\right) \leq e^K  \sum_{y \in \psi(A) \cup B} \exp\left({\int_0^t f(\psi_s(y))ds}\right),$$
and so
$$Z^{\tau'}(\psi,f,\delta,\varepsilon,t) \leq e^K  Z^{\tau'}(\psi|_{{X_\xi}},f,\delta,\varepsilon',t).$$
This implies that $P^{\tau'}(\psi,f) \leq P^{\tau'}(\psi|_{{X_\xi}}, f)$.
As $I(D)$ also satisfies condition (C3), a similar argument shows that $P^{\tau'}(\psi,f)=P^{\tau'}(\psi|_{{X_\xi \setminus I(D)}}, f)$.
\end{proof}

Using the semiconjugacy $H$ between the semiflows $\psi$ and $\tilde \psi$, we now project on $\widetilde{X}_\xi$ the admissible functions $\tau$ and $\tau'$, as done in \cite[Lemma 4.5]{ACV15}, thus getting admissible functions on $\widetilde{X}_\xi$. This way, we may compare the corresponding pressures.

\begin{lemma}\label{le.equal3} If $f \in V^*(\psi)$, then     $P^{\tau'}(\psi,f) = P^{\tilde\tau'}(\tilde\psi,\tilde f).$
\end{lemma}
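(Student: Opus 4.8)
The plan is to reduce the statement, via Lemma~\ref{le.equal2}, to the equality $P^{\tau'}(\psi|_{X_\xi},f)=P^{\tilde\tau'}(\widetilde\psi,\tilde f)$, and then to transport the pressure along the semiconjugacy $H\colon X_\xi\to\widetilde X_\xi$ in both directions, following \cite[Lemma~4.5]{ACV15}. First I would check that $\tilde\tau'$, defined on the compact space $\widetilde X_\xi$ by $\tilde\tau'(\tilde x):=\tau'(x)$ for the unique representative $x\in X_\xi$ of $\tilde x$, is a well-defined admissible function for $\widetilde\psi$: the gap inequality is inherited verbatim from $\tau'$, and the cocycle-type identity in the definition of admissibility is transported through \eqref{eq.conjuga2}; this is exactly the construction of \cite[Lemma~4.5]{ACV15}. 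By construction $H$ is then a $(\tau',\tilde\tau')$-semiconjugacy, and by \eqref{eq.dtil} it is $1$-Lipschitz, hence uniformly continuous, surjective onto $\widetilde X_\xi$, and with singleton fibres; moreover $\tilde f:=f\circ H^{-1}$ is continuous on $\widetilde X_\xi$ because $f$ is constant on $\sim$-equivalence classes (condition~(1) in the definition of $V^*(\psi)$), and $\tilde f\circ H=f$. Applying Lemma~\ref{le.invariant} to $h=H$ now gives $P^{\tau'}(\psi|_{X_\xi},f)=P^{\tau'}(\psi|_{X_\xi},\tilde f\circ H)\ge P^{\tilde\tau'}(\widetilde\psi,\tilde f)$, and together with Lemma~\ref{le.equal2} this yields $P^{\tau'}(\psi,f)\ge P^{\tilde\tau'}(\widetilde\psi,\tilde f)$.

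For the reverse inequality we cannot simply apply Lemma~\ref{le.invariant} to $H^{-1}$, since $H^{-1}$ is not uniformly continuous on all of $\widetilde X_\xi$: a point of $X_\xi$ approaching $D$ is identified, in the quotient metric $\tilde d$, with its impulse image in $I(D)$, so $\tilde d$-closeness does not imply $d$-closeness near the discontinuity set. I would instead argue directly on separated sets, mimicking \cite[Lemma~4.5]{ACV15}. Fix $t>0$ and small $\varepsilon,\delta>0$ and let $E\subset X_\xi$ be finite and $(\psi|_{X_\xi},\tau',\delta,\varepsilon,t)$-separated; I claim $H(E)$ is $(\widetilde\psi,\tilde\tau',\delta,\varepsilon',t)$-separated for some $\varepsilon'=\varepsilon'(\varepsilon,\delta)$ with $\varepsilon'\to0$ as $\varepsilon\to0$. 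The crucial point is that for every $x\in X_\xi$ and every $s\in J_{t,\delta}^{\tau'}(x)$ the point $\psi_s(x)$ lies in a compact set $K_\delta\subset X_\xi$ that is bounded away from $D\cup I(D)$ by a constant $r(\delta)>0$ depending only on $\delta$: removing the $\delta$-neighbourhoods of the impulsive times and of the visit times to $I(D)$ confines the orbit to $\varphi$-arcs issued from $I(D)$, or from $x$ on the initial arc, over time-intervals that stay a definite distance from the corresponding first-hitting time of $D$, and the uniform bound $r(\delta)$ follows from the continuity of $\tau^*_\xi$ on the compact set $X\setminus D_\xi$ (condition~(C5)), which forces $\tau_1\to0$ as one approaches $D$ within $X_\xi$, together with the lower bound $\tau_1\ge\eta$ on $I(D)$ (Remark~\ref{taueta}) and conditions~(C3)--(C4). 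On $\pi(K_\delta)$ the projection $\pi$ does not collapse distances: points of $K_\delta$ lie in $X_\xi\setminus I(D)$, so their $\sim$-classes are singletons, and Lemma~\ref{le.chain} then gives $d(a,b)\le 2\tilde d(\pi(a),\pi(b))$ for all $a,b\in K_\delta$; and when $\psi_s(y)$ instead lies near $D\cup I(D)$ the quotient distance $\tilde d(\widetilde\psi_s(\tilde x),\widetilde\psi_s(\tilde y))$ is still bounded below, because the $D$--$I(D)$ gluing is the only mechanism that shrinks distances in the quotient and $\psi_s(x)\in K_\delta$ is far from both $D$ and $I(D)$. Hence $d(\psi_s(x),\psi_s(y))>\varepsilon$ at some $s\in J_{t,\delta}^{\tau'}(x)$ forces $\tilde d(\widetilde\psi_s(\tilde x),\widetilde\psi_s(\tilde y))>\varepsilon'$, proving the claim; since $\tilde f(\widetilde\psi_s(\tilde x))=f(\psi_s(x))$ for $\tilde x=H(x)$, the exponential sums over $E$ and over $H(E)$ coincide, so $Z^{\tilde\tau'}(\widetilde\psi,\tilde f,\delta,\varepsilon',t)\ge Z^{\tau'}(\psi|_{X_\xi},f,\delta,\varepsilon,t)$. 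Letting $t\to\infty$, then $\varepsilon\to0$ (so $\varepsilon'\to0$), then $\delta\to0$, we get $P^{\tilde\tau'}(\widetilde\psi,\tilde f)\ge P^{\tau'}(\psi|_{X_\xi},f)=P^{\tau'}(\psi,f)$, which completes the proof.

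The main obstacle is the quantitative estimate just described: showing that staying $\delta$-away from the times recorded by $\tau'$ keeps the orbit a \emph{uniform} distance $r(\delta)>0$ away from $D\cup I(D)$, so that on that region the quotient map behaves like a bi-Lipschitz equivalence and the separation survives the projection. Once that is in place, everything else is the bookkeeping already developed in Sections~\ref{se.refinement} and~\ref{sec.quotient}.
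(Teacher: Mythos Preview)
Your argument is correct, but for the reverse inequality you take a genuinely different route from the paper. Both directions start the same way: apply Lemma~\ref{le.invariant} to the $(\tau',\tilde\tau')$-semiconjugacy $H$ (which is $1$-Lipschitz by~\eqref{eq.dtil}) to obtain $P^{\tau'}(\psi|_{X_\xi},f)\ge P^{\tilde\tau'}(\widetilde\psi,\tilde f)$, and then invoke Lemma~\ref{le.equal2}.

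For the converse, however, the paper does \emph{not} work directly with separated sets or attempt the uniform estimate $r(\delta)$ that you identify as the main obstacle. Instead it simply restricts the inverse map: on $\widetilde X_\xi\setminus\pi(D)$ every equivalence class is a singleton (the preimage point lies in $X_\xi\setminus I(D)$, hence in neither $D$ nor $I(D)$), so Lemma~\ref{le.chain} makes $H^{-1}\colon \widetilde X_\xi\setminus\pi(D)\to X_\xi\setminus I(D)$ a $2$-Lipschitz $(\tilde\tau',\tau')$-semiconjugacy, and Lemma~\ref{le.invariant} applies again in the opposite direction to give $P^{\tilde\tau'}(\widetilde\psi|_{\widetilde X_\xi\setminus\pi(D)},\tilde f)\ge P^{\tau'}(\psi|_{X_\xi\setminus I(D)},f)$. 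The excised set $I(D)$ is then absorbed by the final sentence in the proof of Lemma~\ref{le.equal2} (which uses the half-tube condition on $I(D)$ to show $P^{\tau'}(\psi|_{X_\xi\setminus I(D)},f)=P^{\tau'}(\psi,f)$), and monotonicity in the domain handles the $\widetilde X_\xi$ side. Your approach trades this domain restriction for the analytic claim that times in $J^{\tau'}_{t,\delta}(x)$ keep $\psi_s(x)$ uniformly away from $D\cup I(D)$; this is more self-contained and makes transparent why one refines $\tau$ to $\tau'$, but it requires carrying out the estimate you flag (which does follow from (C4)--(C5), essentially as in \cite[Lemma~4.5]{ACV15}). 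The paper's route is more modular, reusing Lemma~\ref{le.invariant} twice and pushing all geometric work into Lemma~\ref{le.equal2}.
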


\begin{proof}
Given $f\in V^*(\psi)$,
let  $0<\xi<\xi_0$ be given by~Lemma~\ref{le.equal2}.
It follows from  Lemma~\ref{le.invariant} applied to the semiconjugacy $H:X_\xi \to \widetilde{X}_\xi$ that $P^{\tilde\tau'}(\tilde\psi, \tilde f)\leq P^{\tau'}(\psi|_{{X_\xi}}, f)$. Additionally, Lemma~\ref{le.equal2} ensures that $P^{\tau'}(\psi|_{{X_\xi}}, f)=P^{\tau'}(\psi, f)$. Thus,
\begin{equation}\label{eq.other}
P^{\tilde\tau'}(\tilde\psi, \tilde f)\leq P^{\tau'}(\psi, f).
\end{equation}
Conversely, as $H^{-1}:\widetilde{X}_\xi \setminus \pi(D) \rightarrow X_\xi\setminus I(D)$ is a $(\tilde\tau', \tau')$-semiconjugacy between $\tilde\psi\vert_{{\widetilde X_\xi\setminus \pi(D)}}$ and $\psi|_{{X_\xi\setminus I(D)}}$, it follows from Lemma~\ref{le.invariant} that $P^{\tilde\tau'}(\tilde\psi{|_{\widetilde{X}_\xi \setminus \pi(D)}}, \tilde f)\geq P^{\tau'}(\psi|_{{X_\xi \setminus I(D)}}, f).$ Moreover, as $f \in V^*(\psi)$, from Lemma~\ref{le.equal2} we have $P^{\tau'}(\psi|_{{X_\xi \setminus I(D)}}, f)=P^{\tau'}(\psi, f)$. Besides, as $\widetilde{X}_\xi \setminus \pi(D) \subset \widetilde{X}_\xi$, we also know that $P^{\tilde\tau'}(\tilde\psi, \tilde f)\geq P^{\tilde\tau'}(\tilde\psi{|_{\widetilde{X}_\xi \setminus \pi(D)}}, \tilde f)$. Hence,
$$P^{\tilde\tau'}(\tilde\psi, \tilde f)\geq P^{\tilde\tau'}(\tilde\psi{|_{\widetilde{X}_\xi \setminus \pi(D)}})\geq P^{\tau'}(\psi|_{{X_\xi \setminus I(D)}}, f)=P^{\tau'}(\psi, f),$$
which together with~\eqref{eq.other} gives the result. \end{proof}

Let us resume the proof of Theorem~\ref{te.pressure}. Given $f\in V^*(\psi)$, it follows from
Theorem~\ref{te.coincide} that
$$P^{\tilde\tau'}(\tilde\psi,\tilde f)=P(\tilde\psi, \tilde f),$$
which together with Lemma~\ref{le.equal1}, Lemma~\ref{le.equal2} and Lemma~\ref{le.equal3}
 yields
$$P^\tau(\psi,f) = P(\tilde\psi,\tilde f).$$
Now, it follows from  \eqref{eq.varprin}
 that
$$\sup_{\eta \in \mathcal{M}_{\tilde\psi}(X)} \,\left\{h_\eta(\tilde\psi_1) + \int \tilde f\,d\eta\right\} = \sup_{\nu \in \mathcal{M}_{\psi}(X)} \,\left\{h_\mu(\psi_1) + \int f\,d\nu\right\}.$$
Moreover, equation \eqref{eq.pressure} gives
$$P(\tilde\psi,\tilde f) = \sup_{\eta \in \mathcal{M}_{\tilde\psi}(X)} \,\left\{h_\eta(\tilde\psi_1) + \int \tilde f\,d\eta\right\}.$$
Therefore,
$$P^\tau(\psi,f) = \sup_{\nu \in \mathcal{M}_{\psi}(X)} \,\left\{h_\mu(\psi_1) + \int f\,d\nu\right\},$$
which ends the proof of Theorem~\ref{te.pressure}.

\section{Examples}\label{se.examples}

Here we give two examples of impulsive dynamical systems for which the variational principle and the existence and uniqueness of equilibrium states hold. This follows from the simple fact that the impulsive semiflow is uniquely ergodic in the first example. In the second one we show that the impulsive semiflow is expansive and has the specification property and then use Theorem~\ref{te.existence_uniqueness} and Theorem~\ref{te.pressure}.


\subsection{Suspension of  a rotation} Here we define an impulsive semiflow on a suspension of an irrational rotation on $S^1$ which is uniquely ergodic. Consider the unit circle $S^1=\{e^{2\pi i x} \in \mathbb{C} \colon 0\leq x < 1\}$ and an irrational number  $\theta_1 \in \,]0,1[$. Let $R_{\theta_1}: S^1 \rightarrow S^1$ be the irrational rotation $R_{\theta_1}(e^{2\pi i x})=e^{2\pi i (x+\theta_1)}$ on $S^1$. Consider the cylinder
\begin{equation*}\label{eq.Yc1}
 Y=\{(z,u)\colon z\in S^1, \,\, 0\le u\le 1\}
\end{equation*}
and the $2$-torus
\begin{equation*}\label{eq.Xc1}
 X=Y/{\approx},
\end{equation*}
where $\approx$ is the equivalence relation in $X$ given by $(z,1)\approx (R_{\theta_1}(z),0).$ We define the suspension flow $\varphi:\mathbb R^+_0\times X\to X$ over the rotation $R_{\theta_1}$ as
 $$
 \varphi(t,(z,u))=
 \begin{cases}
 (z,t+u) ,& 0\le t+u < 1;\\
 (R_{\theta_1}(z),0), & t+u=1.
 \end{cases}
 $$
As $R_{\theta_1}$ is uniquely ergodic, the unique invariant probability measure invariant by the suspension flow $\varphi$ is Lebesgue measure on $X=S^1\times S^1$; see \cite[Chapter 6]{PP90}.

Now, take another irrational number $\theta_2 \in \, ]0,1[$ and  consider the set
 $$D=S^1\times\left\{\frac{1}{2}\right\}\subset X.$$
 Let the impulsive function $I:D\to X$ be the map defined as $$I\left(z,\frac{1}{2}\right)=\left(R_{\theta_2}(z),\frac{3}{4}\right),$$ where $R_{\theta_2}$ is the irrational rotation of angle $\theta_2$. Then define $\psi:\mathbb R^+_0\times X\to X$ as the impulsive semiflow of the impulsive dynamical system $(X,\varphi, D, I)$.  It is straightforward to check that the impulsive semiflow $\psi$ satisfies  conditions (C1)-(C5).

 Consider the suspension flow $\widetilde{\varphi}$ on $X$ over the rotation $R_{\theta_1+\theta_2}$; as in the case of $\varphi$, the flow $\widetilde{\varphi}$ is uniquely ergodic. We note that the map $\mathcal{F}: \Omega_\psi \setminus D\to \widetilde X$ defined as
 $$
 \mathcal{F}(x,u)=
  \begin{cases}
 (x,u) ,& 0\le u\le \frac{1}{2};\\
 (R_{\theta_2}^{-1}(x),2u-1), & \frac{3}{4}\le u \le 1;
 \end{cases}
$$
is a continuous bijection. 
Moreover, $\mathcal F$  conjugates the flows $\widetilde{\varphi}$ and $\psi_{\vert_{\Omega_\psi \setminus D}}$, that is
$$\widetilde \varphi_t \circ \mathcal{F}= \mathcal{F} \circ \psi_t$$
for every $t \geq 0$. Consequently, the semiflow $\psi$ is uniquely ergodic and so it has a unique equilibrium state for any continuous potential.

\subsection{Suspension of a shift} Here we define an impulsive semiflow on a suspension of a shift which is expansive and has the specification property.
Let  $(\Sigma_2 , \sigma)$ be the two-sided full shift on 2 symbols. Given two irrational numbers $a,b>3$  linearly independent over $\mathbb Q$, let
$  c:\Sigma_2\to\mathbb R^+$  be the ceiling function defined as $c(x)=a$ if $x_0=0$  and $c(x)=b$ if $x_0=1$. Let
\begin{equation}\label{eq.Yc2}
 Y_c=\{ (x,u)\colon x\in\Sigma_2, \,\, 0\le u\le c(x)\}
\end{equation}
 and
 \begin{equation}\label{eq.Xc2}
 X_c=Y_c/{\sim_c}
 \end{equation}
 where $\sim_c$ is the equivalence relation in $X_c$ given by $(x,c(x))\sim_c (\sigma x,0).$ We define the suspension flow $\varphi_c:\mathbb R^+_0\times X_c\to X_c$ as
 $$
 \varphi_c(t,(x,u))=
 \begin{cases}
 (x,t+u) ,& 0\le t+u\le c(x);\\
 (\sigma x,0), & t+u=c(x).
 \end{cases}
 $$
As $\sigma$ is an expansive map, it follows from \cite[Theorem 6]{BW72} that $\varphi_c$ is an expansive semiflow. It is straightforward to check that $c$ is not cohomologous to a function taking values in $\beta\mathbb Z$ for some $\beta>0$. Therefore, by \cite[Proposition~5]{QS12} the flow $\varphi_c$ has the periodic specification property.
Consider
$$D=\Sigma_2\times\{1\}\subset X_c$$
and the impulsive function $I:D\to X_c$ defined as $$I((x_n)_n,1)=((1-x_n)_n,3).$$ Notice that, on the first coordinate, the map $I$ acts as the isometry $\mathcal{R}: \Sigma_2 \to \Sigma_2$ that reverses each digit,  so condition (C1) holds. Moreover, $Y_c \subset \Sigma_2 \times [0, \max\{a,b\}]$ is a finite dimensional metric space; see \cite[Theorems 4.1.7, 4.1.21 \& 4.1.25]{E95}.
Since each class of the equivalence relation $\sim_c$ has at most two elements, by \cite[Theorem 3.3.7]{E95} the dimension of $X_c$ is finite as well.

Let $\psi_c:\mathbb R^+_0\times X_c\to X_c$ be the impulsive semiflow of the impulsive dynamical system $(X_c,\varphi_c, D,I) $.
 As
$$\Omega_{\psi_c} \setminus D = \Big(\{(x,u)\colon x\in\Sigma_2, \,\, 0\le u < 1\}\cup \{(x,u)\colon x\in\Sigma_2, \,\, 3\le u \le c(x)\}\Big)/{\sim_c},$$
condition (C2) is also valid. Besides, the semiflow $\psi_c$ satisfies conditions (C3)-(C5) for any $0<\xi < \xi_0=2$. We are left to verify that $\psi_c$ restricted to $\Omega_{\psi_c} \setminus D$ is expansive and has the periodic specification property.

 Consider the ceiling function $\tilde c= c \circ \mathcal{R}-2$ and the corresponding spaces $Y_{\tilde c}$ and $X_{\tilde c}$ defined as in \eqref{eq.Yc2} and \eqref{eq.Xc2}, respectively. The suspension flow $\varphi_{\tilde c}$ on $X_{\tilde c}$ is expansive, as in the case of $\varphi_c$; moreover, as $\mathcal{R}$ commutes with $\sigma$ and is an involution, it is not difficult to show that $\tilde c$ is not cohomologous to a function taking values in $\beta\mathbb Z$ for some $\beta>0$, and so the flow $\varphi_{\tilde c}$ has the specification property.

 Let $\mathcal{F}: \Omega_{\psi_c} \setminus D \to X_{\tilde c}$ be defined as
$$
  \mathcal{F}(x,u)=
  \begin{cases}
 (x,u) ,& 0\le u< 1;\\
 (\mathcal{R}^{-1}(x),u-2), & 3\le u\le c(x).
 \end{cases}
$$
It is easy to verify that the map $\mathcal{F}$ is a continuous bijection.
Besides, as $I(x,1)=(\mathcal{R}(x),3)$ for all $x \in \Sigma_2$, the map $\mathcal{F}$ conjugates the semiflows $\psi_c$ on $\Omega_{\psi_c} \setminus D$ and $\varphi_{\tilde c}$. 
Hence, $\psi_c$ is expansive and has the periodic specification property in $\Omega_{\psi_c} \setminus D$. Therefore, by Theorem~\ref{te.existence_uniqueness} the semiflow $\psi$ has a unique equilibrium state for any potential in $V^*(\psi)$.


\subsection*{Acknowledgements} We are grateful to A. Quas and V. Climenhaga for valuable discussions.

\end{document}